\newcommand{\ve}{{\bf e}}
\begin{document}
	\large
	
	\title{Positive Sem-idefinite and Sum of Squares Biquadratic Polynomials}
	\author{Chunfeng Cui\footnote{School of Mathematical Sciences, Beihang University, Beijing  100191, China.
			({\tt chunfengcui@buaa.edu.cn})}
		\and
		Liqun Qi\footnote{Jiangsu Provincial Scientific Research Center of Applied Mathematics, Nanjing 211189, China.
			Department of Applied Mathematics, The Hong Kong Polytechnic University, Hung Hom, Kowloon, Hong Kong.
			({\tt maqilq@polyu.edu.hk})}
		\and {and \
			Yi Xu\footnote{School of Mathematics, Southeast University, Nanjing  211189, China. Nanjing Center for Applied Mathematics, Nanjing 211135,  China. Jiangsu Provincial Scientific Research Center of Applied Mathematics, Nanjing 211189, China. ({\tt yi.xu1983@hotmail.com})}
		}
	}

	\date{\today}
	\maketitle
	
	\begin{abstract}
	Hilbert proved in 1888 that a positive semi-definite (PSD) homogeneous quartic polynomial of three variables  always can be expressed as the sum of squares (SOS) of three quadratic polynomials, and a psd homogeneous quartic polynomial of four variables may not be sos.  Only after {87} years, in {1975}, {Choi} gave the explicit expression of such a psd-not-sos (PNS) homogeneous quartic polynomial of four variables.   An $m \times n$ biquadratic polynomial is a homogeneous quartic polynomial of $m+n$ variables. 		In this {paper}, we show that an $m \times n$ biquadratic polynomial {can} be expressed as a tripartite homogeneous quartic polynomial of $m+n-1$ variables.  Therefore, {by Hilbert's theorem}, a $2 \times 2$ PSD biquadratic polynomial {can be expressed as the sum of squares of three quadratic polynomials.   This  improves the result of Calder\'{o}n in 1973, who proved that a $2 \times 2$ biquadratic polynomial can be expressed as the sum of squares of nine quadratic polynomials.}      Furthermore, we present a necessary and sufficient condition for {an $m \times n$}  psd biquadratic polynomial to be sos, and show that if such a polynomial is sos, then its sos rank is at most $mn$.
	{Then we give a constructive proof of
		the sos form of a $2 \times 2$ psd biquadratic polynomial in three cases.}
		

		\medskip

		\medskip

		\textbf{Key words.} Biquadratic polynomials, sum-of-squares, positive semi-definiteness, biquadratic  polynomials, tripartite quartic polynomials.
		
		\medskip
		\textbf{AMS subject classifications.} {11E25, 12D15, 14P10, 15A69, 90C23.
		}
	\end{abstract}

	\renewcommand{\Re}{\mathds{R}}
	\newcommand{\rank}{\mathrm{rank}}
	\newcommand{\X}{\mathcal{X}}
	\newcommand{\A}{\mathcal{A}}
	\newcommand{\I}{\mathcal{I}}
	\newcommand{\B}{\mathcal{B}}
	\newcommand{\PP}{\mathcal{P}}
	\newcommand{\C}{\mathcal{C}}
	\newcommand{\D}{\mathcal{D}}
	\newcommand{\LL}{\mathcal{L}}
	\newcommand{\OO}{\mathcal{O}}
	\newcommand{\e}{\mathbf{e}}
	\newcommand{\0}{\mathbf{0}}
	\newcommand{\1}{\mathbf{1}}
	\newcommand{\dd}{\mathbf{d}}
	\newcommand{\ii}{\mathbf{i}}
	\newcommand{\jj}{\mathbf{j}}
	\newcommand{\kk}{\mathbf{k}}
	\newcommand{\va}{\mathbf{a}}
	\newcommand{\vb}{\mathbf{b}}
	\newcommand{\vc}{\mathbf{c}}
	\newcommand{\vq}{\mathbf{q}}
	\newcommand{\vg}{\mathbf{g}}
	\newcommand{\pr}{\vec{r}}
	\newcommand{\pc}{\vec{c}}
	\newcommand{\ps}{\vec{s}}
	\newcommand{\pt}{\vec{t}}
	\newcommand{\pu}{\vec{u}}
	\newcommand{\pv}{\vec{v}}
	\newcommand{\pn}{\vec{n}}
	\newcommand{\pp}{\vec{p}}
	\newcommand{\pq}{\vec{q}}
	\newcommand{\pl}{\vec{l}}
	\newcommand{\vt}{\rm{vec}}
	\newcommand{\x}{\mathbf{x}}
	\newcommand{\vx}{\mathbf{x}}
	\newcommand{\vy}{\mathbf{y}}
	\newcommand{\vu}{\mathbf{u}}
	\newcommand{\vv}{\mathbf{v}}
	\newcommand{\y}{\mathbf{y}}
	\newcommand{\vz}{\mathbf{z}}
	\newcommand{\T}{\top}
	\newcommand{\R}{\mathcal{R}}
	\newcommand{\Q}{\mathcal{Q}}
	\newcommand{\TT}{\mathcal{T}}
	\newcommand{\Sc}{\mathcal{S}}
	\newcommand{\N}{\mathbb{N}}	
	
	\newtheorem{Thm}{Theorem}[section]
	\newtheorem{Def}[Thm]{Definition}
	\newtheorem{Ass}[Thm]{Assumption}
	\newtheorem{Lem}[Thm]{Lemma}
	\newtheorem{Prop}[Thm]{Proposition}
	\newtheorem{Cor}[Thm]{Corollary}
	\newtheorem{example}[Thm]{Example}
	\newtheorem{remark}[Thm]{Remark}

	\section{Introduction}
	
	In	1888, when he was 26, David Hilbert~\cite{Hi88} pointed out that for homogeneous polynomials, only in the following three cases, a~positive semi-definite ({PSD}) polynomial definite is a sum of squares ({SOS}) polynomial: (i) $N=2$; (ii) $M=2$; (iii) $M=4$; and $N=3$, where $M$ is the degree of the polynomial and {$N$} is the number of variables.   Hilbert proved that in all the other possible combinations of $M=2K$ and $N$, there are {PSD} non-{SOS} ({PNS}) homogeneous polynomials.    {Furthermore, Hilbert established that every  PSD  homogeneous quartic polynomial in three variables admits a representation as an SOS of three quadratic forms. His original proof of this result, demonstrating the equivalence between PSD and SOS for ternary quartics, combines striking conciseness with pioneering topological methods that were remarkably advanced for their era, while mathematically elegant, the~proof's innovative approach proved difficult  to fully comprehend, and~even modern readers must carefully work through several non-obvious details. Recent  literature has witnessed both comprehensive reconstructions of Hilbert's original proof and the emergence of novel alternative proofs. For~instance, in~2012, Pfister and Scheiderer~\cite{PS12} presented a completely	new approach that uses only elementary~techniques.
		
		While Hilbert established in 1888 that PSD  polynomials coincide with  SOS  in only three specific cases, the~first explicit example of a  PNS  homogeneous polynomial was not constructed until 77 years later, in~1965, when Motzkin~\cite{Mo65} gave the first  PNS homogeneous polynomial:}
	$$f_M(\x) = x_3^6+x_1^2x_2^4+x_1^4x_2^2 - 3x_1^2x_2^2x_3^2,$$
	{where}   $M=6$ and $N=3$.  Thus, the~ remaining case with small $M$ and $N$ is that $M = N = 4$.  In~1977, Choi and Lam~\cite{CL77} gave such a {PNS} polynomial:
	\begin{equation} \label{fcl}
		f_{CL}(\x) = x_1^2x_2^2 + x_1^2x_3^2 + x_2^2x_3^2 - 4x_1x_2x_3x_4 + x_4^4.
	\end{equation}
	Also,	see~\cite{Ch07, Re78} for {PNS} homogeneous {polynomials.}
	In 2017, Goel~et~al.~\cite{GKR17} {showed} that a  symmetric {PSD} polynomial with $M = N = 4$ is {SOS.}
	This raises a natural question: Do analogous results hold for nonsymmetric quartic forms under additional structural constraints?

	In this paper, we study the {PSD-SOS} problem of a special class of homogeneous quartic polynomials: biquadratic polynomials.   In~general, an~$m \times n$ biquadratic polynomial {can be expressed as follows:}
	\begin{equation} \label{BQ}
		f(\hat \x, \hat \y) = \sum_{i, j=1}^m \sum_{k, l=1}^n a_{ijkl}x_ix_jy_ky_l,
	\end{equation}
	where $\hat \x = (x_1, \dots, x_m)^\top$ and $\hat \y = (y_1, \dots, y_n)^\top$.
	{
		Biquadratic polynomials and biquadratic tensors arise in solid mechanics, statistics,  quantum physics, spectral graph theory, and~polynomial theory~\cite{QC25}. Biquadratic SOS decompositions enable efficient solutions to non-convex optimization problems via SDP relaxations. In~the analysis of nonlinear dynamical systems, stability can be rigorously verified through the construction of Lyapunov functions that admit an SOS  decomposition.}

	Without loss of generality, we may assume that $m \ge n \ge 2$.
	Thus, an~$m \times n$ biquadratic polynomial is a quartic homogeneous polynomial of $m+n$ variables, i.e.,~$M= 4$ and $N = m+n$.   However, we will show that for the {PSD-SOS} problem, this value of $m+n$ can be reduced by one.   In Section~\ref{sec2}, we show that for an $m \times n$ biquadratic polynomial $f$, there exists a special quartic homogeneous polynomial $h$, which we call a tripartite quartic polynomial, with~$N = m+n-1$ variables, such that $f$ is {PSD} if and only if $h$ is {PSD,} and $f$ is {SOS} if and only if $h$ is {SOS.}   Then, according to~Hilbert's result, we immediately conclude that a $2 \times 2$ {PSD} biquadratic polynomial {can be expressed as the sum of squares of three quadratic polynomials.   This greatly improves the result of Calder\'{o}n \cite{Ca73} in 1973, who proved that a $2 \times 2$ biquadratic polynomial can be expressed as the sum of squares of nine quadratic polynomials.}
	{Furthermore,} we present a necessary condition for a tripartite quartic polynomial to be {PSD}, and~a necessary and sufficient condition for it to be {PSD} under the condition that the coefficient of the quartic power term of the flexible variable is~zero.
	
	In Section~\ref{sec3}, we give a sufficient and necessary condition for an $m \times n$ {PSD} biquadratic polynomial to be {SOS.}  We show that if an $m \times n$ {PSD} biquadratic polynomial is {SOS,} than its {SOS} rank is at most $mn$.
	
	For a $2 \times 2$ {PSD} biquadratic polynomial, we may explicitly construct its {SOS} representation  in some cases.
	A $2 \times 2$ biquadratic polynomial $f$ has four half-cross terms and one full-cross term.
	{In Section~\ref{sec4},},  we show that for a $2 \times 2$ {PSD} biquadratic polynomial $f$, there is another $2 \times 2$ {PSD} biquadratic polynomial $h$, which has at most two half-cross terms, such that if $h$ has an {SOS} form, then the {SOS} form of $f$ can be {explicitly constructed.}   Then we give a constructive proof of
	the {SOS} form of a $2 \times 2$ {PSD} biquadratic polynomial $f$, if~it has either at most one half-cross term, or~two half-cross terms but no full-cross~term.
	
	Some final remarks and open questions are made and raised in Section~\ref{sec5}.

	\section{Biquadratic Polynomials and Tripartite Quartic~Polynomials\label{sec2}}
	
	The following theorem is one main {result} of this~paper.
	
	\begin{Thm} \label{BQ-TRI}
		Suppose that we have an $m \times n$ biquadratic polynomial $f$, expressed by~(\ref{BQ}), with~\mbox{$m \ge n \ge 2$}.  Then there is a tripartite homogeneous quartic polynomial $h(\x, \y, z)$ of $m+n-1$ variables, where $\x = (x_1, \dots, x_{m-1})^\top$ and $\y = (y_1, \dots, y_{n-1})^\top$, such that $f$ is {PSD} if and only if $h$ is {PSD,} and $f$ is {SOS} if and only if $h$ is {SOS.}   In particular, a~$2 \times 2$ {PSD} biquadratic polynomial is always {SOS} of three quadratic polynomials.
	\end{Thm}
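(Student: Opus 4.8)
The plan is to realize $h$ as an explicit substitution instance of $f$ and then transport positive semidefiniteness and the SOS property across this substitution, in both directions. Concretely I would set
\[
h(\x,\y,z):=f\bigl(x_1,\dots,x_{m-1},z,\;y_1,\dots,y_{n-1},z\bigr),
\]
that is, collapse the two variables $x_m$ and $y_n$ in~(\ref{BQ}) into a single new variable $z$. Since the substitution preserves total degree and $f$ is homogeneous of degree $4$, $h$ is a homogeneous quartic in the $m+n-1$ variables $(\x,\y,z)$; and since $f$ has degree $2$ in $\hat\x$ and degree $2$ in $\hat\y$, the polynomial $h$ has degree at most $2$ in the block $\x$ and at most $2$ in the block $\y$, which is exactly the defining feature of a tripartite quartic. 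The first step is the elementary observation that the induced map $g\mapsto g(x_1,\dots,x_{m-1},z,y_1,\dots,y_{n-1},z)$ is a \emph{linear bijection} from $m\times n$ biquadratic polynomials onto tripartite quartics: a monomial $\x^{\alpha}\y^{\beta}z^{d}$ of a tripartite quartic necessarily satisfies $|\alpha|\le 2$, $|\beta|\le 2$ and $d=4-|\alpha|-|\beta|$, and it is the image of the unique monomial $\x^{\alpha}x_m^{2-|\alpha|}\,\y^{\beta}y_n^{2-|\beta|}$. This injectivity is the crucial bookkeeping device.

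For the PSD equivalence, one direction is trivial because $h$ is a restriction of $f$. For the converse I would use the bihomogeneity $f(t\hat\x,s\hat\y)=t^{2}s^{2}f(\hat\x,\hat\y)$: given any $\hat\x,\hat\y$ with $x_m\neq 0\neq y_n$, take $t=1/x_m$ and $s=1/y_n$, so that $tx_m=sy_n=1$ and hence
\[
f(\hat\x,\hat\y)=(x_m y_n)^{2}\,f(t\hat\x,s\hat\y)=(x_m y_n)^{2}\,h\bigl(x_1/x_m,\dots,x_{m-1}/x_m,\;y_1/y_n,\dots,y_{n-1}/y_n,\;1\bigr)\geq 0
\]
whenever $h$ is PSD. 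Since $\{x_m y_n\neq 0\}$ is dense in $\Re^{m+n}$ and $f$ is continuous, $f$ is PSD everywhere.

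For the SOS equivalence, the forward direction is again immediate: substituting $x_m=y_n=z$ into each homogeneous quadratic summand of an SOS representation of $f$ leaves it a quadratic form, so $h$ is SOS. The substantive direction, and the main obstacle, is the converse. Suppose $h=\sum_{k}p_k^{2}$ with each $p_k$ a quadratic form in $(\x,\y,z)$. Because $h$ is homogeneous of degree $4$ with degree at most $2$ in $\x$, it contains no monomial in the variables $\x$ alone, so the part of $\sum_k p_k^{2}$ supported on such monomials---which equals $\sum_k A_k(\x)^{2}$, where $A_k$ is the $\x$-only part of $p_k$---must vanish, forcing every $A_k=0$; symmetrically the $\y$-only part of each $p_k$ vanishes. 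Hence $p_k=\lambda_k z^{2}+z\bigl(\ell_k^{x}(\x)+\ell_k^{y}(\y)\bigr)+q_k(\x,\y)$ with $\ell_k^{x},\ell_k^{y}$ linear and $q_k$ bilinear in $(\x,\y)$, and I would lift $p_k$ to the bilinear form $Q_k(\hat\x,\hat\y):=\lambda_k x_m y_n+y_n\,\ell_k^{x}(\x)+x_m\,\ell_k^{y}(\y)+q_k(\x,\y)$, which substitutes back to $p_k$. Then $\sum_k Q_k^{2}$ is a biquadratic polynomial whose image under the substitution is $\sum_k p_k^{2}=h$; since $f$ has the same image and the substitution is injective, $f=\sum_k Q_k^{2}$, so $f$ is SOS.

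Finally, when $m=n=2$ the polynomial $h$ is a ternary quartic form, so if $f$ is PSD then $h$ is PSD, and by Hilbert's theorem on ternary quartics $h=p_1^{2}+p_2^{2}+p_3^{2}$ for some quadratic forms $p_1,p_2,p_3$; the lifting above then gives $f=Q_1^{2}+Q_2^{2}+Q_3^{2}$ with each $Q_k$ a bilinear---in particular quadratic---form in $x_1,x_2,y_1,y_2$, which is the asserted representation. The only genuinely delicate point in the argument is the SOS converse: one must use the degree constraints in the definition of a tripartite quartic to pin down the shape of the summands $p_k$ (so that they lift to bilinear forms), and then invoke injectivity of the substitution to certify that the lifted sum of squares is $f$ itself, rather than merely a biquadratic polynomial agreeing with $f$ after substitution; the rest is routine.
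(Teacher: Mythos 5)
Your proposal follows the same strategy as the paper---collapse $x_m$ and $y_n$ into a single flexible variable $z$ and invoke Hilbert's ternary--quartic theorem---but it is considerably more careful than the paper's two-sentence proof, which merely asserts the PSD and SOS equivalences across the substitution. Your added arguments are exactly the ones that should be spelled out: the bihomogeneity-and-density argument for recovering PSD-ness of $f$ from PSD-ness of $h$, and, for the SOS converse, the observation that the tripartite degree constraints force each summand $p_k$ to have vanishing $\x$-only and $\y$-only parts (so it lifts to a bilinear form $Q_k$) together with injectivity of the substitution to certify $f=\sum_k Q_k^2$; this last point in particular is not at all automatic and is silently skipped in the paper.
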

	\begin{proof}  Let $x_m = 1$ and $y_n = 1$  for $j = 1, \dots, n-1$.  Then we have a non-homogeneous quartic polynomial $g(\x, \y)$, of~ $m+n-2$ variables $x_1, \dots, x_{m-1}$, $y_1, \dots, y_{n-1}$, such that $f$ is {PSD} if and only if $g$ is {PSD,} and $f$ is {SOS} if and only if $g$ is {SOS.}
		
		Now, we may convert $g$ to a homogeneous quartic polynomial $h$ of $m+n-1$ variables, by~adding an additional $z$ in the position of $x_m$ and $y_n$, i.e.,~$h$ is obtained by replacing $x_m$ and $y_n$ by $z$.    Then $g$ is {PSD} if and only if $h$ is {PSD,} and $g$ is {SOS} if and only if $h$ is {SOS.}  The final conclusion now follows from the 1888 Hilbert result~\cite{Hi88}. The~proof is complete.
	\end{proof}

	We call $h(\x, \y, z)$ a tripartite homogeneous quartic polynomial, as~it has the following property: in any term of $h$, the~total degree of $x$-variables is at most $2$, and~the total degree of $y$-variables is also at most $2$.  We call $z$ the flexible variable.  Only the flexible variable can have degree four in $h$.

	A tripartite quartic homogeneous polynomial $h(\x, \y, z)$ can be written as follows:
	\begin{equation} \label{TRI}
		h(\x, \y, z) = h_0{z^4} + h_1(\x, \y){z^3} + h_2(\x, \y){z^2} + h_3(\x, \y)z + h_4(\x, \y),
	\end{equation}
	where $h_0$ is a constant coefficient; $h_4(\x, \y)$ is an $(m-1) \times (n-1)$ biquadratic polynomial; $h_1(\x, \y)$ is a linear homogeneous polynomial of $(\x, \y)$; $h_2(\x, \y)$ is the sum of a quadratic homogeneous polynomial of $\x$, a~quadratic homogeneous polynomial of $\y$, and~a bilinear polynomial of $(\x, \y)$; and $h_3(\x, \y)$ is the sum of a linear quadratic polynomial and a quadratic-linear polynomial of $(\x, \y)$.    We have the following~theorem.
	
	\begin{Thm} \label{TRI-BQ}
		Suppose that we have a tripartite quartic polynomial $h(\x, \y, z)$, expressed by~(\ref{TRI}), where $\x \in \Re^{m-1}, \y \in \Re^{n-1}$, $z \in \Re$ and $m \ge n \ge 2$.  Then there is an $m \times n$ biquadratic polynomial $f(\hat \x, \hat \y)$, where $\hat \x = (x_1, \dots, x_m)^\top$ and $\hat \y = (y_1, \dots, y_n)^\top$, such that $h$ is {PSD} if and only if $f$ is {PSD,} and $h$ is {SOS} if and only if $f$ is {SOS.}
	\end{Thm}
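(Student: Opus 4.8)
\emph{Proof sketch.} The plan is to reverse the reduction carried out in the proof of Theorem~\ref{BQ-TRI}. Writing a generic monomial of $h$ as $c\,\x^\alpha\y^\beta z^k$, tripartiteness gives $|\alpha|\le 2$ and $|\beta|\le 2$, and homogeneity of degree four gives $k=(2-|\alpha|)+(2-|\beta|)$. I would then define $f(\hat\x,\hat\y)$ by replacing, in each monomial of $h$, the factor $z^k$ with $x_m^{\,2-|\alpha|}\,y_n^{\,2-|\beta|}$. The first thing to verify is that $f$ really is an $m\times n$ biquadratic polynomial: every new monomial has $\hat\x$-degree $(2-|\alpha|)+|\alpha|=2$ and $\hat\y$-degree $2$, and the exponents $2-|\alpha|,\,2-|\beta|$ are nonnegative — this last point is precisely where tripartiteness of $h$ is needed. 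By construction $f$ and $h$ have the same dehomogenization $g(\x,\y):=h(\x,\y,1)=f|_{x_m=y_n=1}$, a polynomial of degree at most two in $\x$ and at most two in $\y$, and I would keep $g$ as the bridge between $f$ and $h$.

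The PSD equivalence should be routine. By homogeneity and bihomogeneity, $h(\x,\y,z)=z^4\,g(\x/z,\y/z)$ for $z\neq 0$ and $f(\hat\x,\hat\y)=x_m^2y_n^2\,g(\x/x_m,\y/y_n)$ for $x_m\neq 0\neq y_n$. So if $h$ is PSD then $g\ge 0$ on all of $\Re^{m+n-2}$, hence $f\ge 0$ on the dense set where $x_my_n\neq 0$, hence $f\ge 0$ everywhere by continuity; the converse implication is symmetric, using instead that $g\ge 0$ forces $h\ge 0$ off the hyperplane $z=0$ and then everywhere.

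For the SOS equivalence, one direction is easy: substituting $x_m=z$ and $y_n=z$ turns $f$ into a quartic form that agrees with $h$ at $z=1$ and is therefore equal to $h$, so any representation $f=\sum_j r_j^2$ gives $h=\sum_j(r_j|_{x_m=y_n=z})^2$. The other direction is the main obstacle. Starting from $h=\sum_j q_j^2$ with quadratic forms $q_j$, one gets $g=\sum_j p_j^2$ with $p_j:=q_j|_{z=1}$ of degree at most two, but one cannot naively homogenize the $p_j$ back, because each $p_j^2$ could carry $\x$- or $\y$-degree larger than $2$ while $f$ has bidegree $(2,2)$. The observation that resolves this is that the $p_j$ must in fact be \emph{biaffine}: the $\x$-degree-four part of $\sum_j p_j^2$ equals $\sum_j(\text{$\x$-degree-two part of }p_j)^2$, and since a sum of squares of real polynomials that vanishes identically must vanish termwise and $g$ has $\x$-degree at most two, the $\x$-degree-two part of every $p_j$ is zero; the same argument in the $\y$ variables shows each $p_j$ lies in the span of $\{1,\,x_i,\,y_k,\,x_iy_k\}$. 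Then $\hat p_j(\hat\x,\hat\y):=x_my_n\,p_j(\x/x_m,\y/y_n)$ is bilinear, $\sum_j\hat p_j^2$ has bidegree $(2,2)$, and it agrees with $f$ at $x_m=y_n=1$, so it equals $f$, giving the desired SOS representation. Thus the work concentrates entirely on this biaffineness step; the rest is bookkeeping about homogenization together with a density argument.
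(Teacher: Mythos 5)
Your proposal is correct and follows the same reduction as the paper: dehomogenize $h$ by setting $z=1$ to get $g$, then bi-homogenize $g$ to the biquadratic $f$ by multiplying each monomial by the appropriate powers of $x_m$ and $y_n$. The construction of $f$ is identical, and your density argument for the PSD equivalence matches what the paper intends. Where you genuinely add value is the SOS direction $h\ \text{SOS}\Rightarrow f\ \text{SOS}$: the paper simply asserts that $g$ SOS implies $f$ SOS, but as you correctly observe, this is not automatic, since a square $p_j^2$ with $p_j=q_j|_{z=1}$ of degree two could a priori contribute $\x$-degree-four or $\y$-degree-four terms, which $f$ does not have. Your biaffineness argument — that the $\x$-degree-four part of $\sum_j p_j^2$ is $\sum_j\bigl(p_j^{(2)}\bigr)^2$, that this vanishes because $g$ has $\x$-degree at most two, and that a sum of squares of real polynomials vanishes only if every summand does (and symmetrically in $\y$) — is exactly the missing justification, and it is sound; one could also invoke the Newton-polytope result of Reznick that the paper cites in Section~3, but your elementary degree-counting is cleaner here. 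In short: same route as the paper, but with the one nontrivial step proved rather than asserted.
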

	\begin{proof}  In~(\ref{TRI}), let $z = 1$.  Then we have a non-homogeneous quartic polynomial $g(\x, \y)$, of~$m+n-2$ variables $x_1, \dots, x_{m-1}, y_1, \dots, y_{n-1}$, such that $h$ is {PSD} if and only if $g$ is {PSD,} and $h$ is {SOS} if and only if $g$ is {SOS.}
		
		Now, we may convert $g$ to an $m \times n$ biquadratic polynomial $f(\hat \x, \hat \y)$, by~adding an additional variable $x_m$ or $x_m^2$, if~needed, to~terms of $g$ to make the total degrees of $x$-variables of all terms of $g$ as $2$, and~an additional variable $y_n$ or $y_n^2$, if~needed, to~terms of $g$ to make the total degrees of {$y$-variables} of all terms of $g$ as $2$.
		Then $g$ is {PSD} if and only if $f$ is {PSD,} and $g$ is {SOS} if and only if $f$ is {SOS.}  The proof is complete.
	\end{proof}
	
	Thus, the~{PSD-SOS} problem of $m \times n$ biquadratic polynomials is equivalent to the {PSD-SOS} problem of tripartite quartic polynomials of $m+n-1$ variables.
	
	To consider the {PSD-SOS} problem of tripartite quartic polynomials, we have the following~theorem.
	
	\begin{Thm}
		Suppose that $h(\x, \y, z)$ is a tripartite quartic polynomial, expressed by~(\ref{TRI}).  If~$h(\x, \y, z)$ is {PSD,} then $h_0 \ge 0$ and $h_4(\x, \y)$ is {PSD.}
		
		Furthermore, if $h_0 = 0$, then the tripartite quartic polynomial $h(\x, \y, z)$ is {PSD} if and only if (i) $h_1(\x, \y) \equiv 0$, (ii) $h_2(\x, \y)$, $h_4(\x, \y)$ and $4h_2(\x, \y)h_4(\x, \y) - h_3(\x, \y)^2$ are {PSD.} 
	\end{Thm}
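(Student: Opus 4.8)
The plan is to view $h(\x,\y,z)$, for each fixed $(\x,\y)$, as a univariate real polynomial in $z$ of degree at most four, and then read off all the stated conditions from elementary facts about nonnegativity of low-degree univariate polynomials. For the first part, I would substitute $\x = \0$, $\y = \0$ into~(\ref{TRI}) to get $h(\0,\0,z) = h_0 z^4$; PSD-ness forces $h_0 z^4 \ge 0$ for all $z \in \Re$, hence $h_0 \ge 0$. Substituting $z = 0$ gives $h(\x,\y,0) = h_4(\x,\y)$, which must be $\ge 0$ for all $(\x,\y)$, so $h_4$ is PSD.

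For the second part, assume $h_0 = 0$, so that $h(\x,\y,z) = h_1(\x,\y)z^3 + h_2(\x,\y)z^2 + h_3(\x,\y)z + h_4(\x,\y)$. The "if" direction is the easy one: assuming (i) and (ii), fix $(\x,\y)$ and set $a = h_2(\x,\y) \ge 0$, $b = h_3(\x,\y)$, $c = h_4(\x,\y) \ge 0$, with $4ac - b^2 \ge 0$. Then $az^2 + bz + c \ge 0$ for all $z$: when $a > 0$ this is exactly the discriminant inequality, and when $a = 0$ the condition $4ac - b^2 = -b^2 \ge 0$ forces $b = 0$, leaving the constant $c \ge 0$. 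Since $h_1 \equiv 0$, this shows $h(\x,\y,z) \ge 0$ everywhere.

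For the "only if" direction, suppose $h_0 = 0$ and $h$ is PSD. If $h_1(\x_0,\y_0) \ne 0$ for some $(\x_0,\y_0)$, then $z \mapsto h(\x_0,\y_0,z)$ is a genuine cubic with nonzero leading coefficient, which is unbounded below and hence attains negative values, contradicting PSD-ness; therefore $h_1 \equiv 0$, giving (i). Now $h(\x,\y,z) = h_2(\x,\y)z^2 + h_3(\x,\y)z + h_4(\x,\y)$, and for each fixed $(\x,\y)$ this quadratic in $z$ is nonnegative for all $z$. Evaluating at $z = 0$ recovers $h_4(\x,\y) \ge 0$, and dividing by $z^2$ and letting $|z| \to \infty$ gives $h_2(\x,\y) \ge 0$; hence $h_2$ and $h_4$ are PSD. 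Finally, at any $(\x,\y)$ with $h_2(\x,\y) > 0$, nonnegativity of the quadratic forces $h_3(\x,\y)^2 - 4 h_2(\x,\y) h_4(\x,\y) \le 0$; at any $(\x,\y)$ with $h_2(\x,\y) = 0$, the quadratic degenerates to the affine map $z \mapsto h_3(\x,\y)z + h_4(\x,\y)$, whose global nonnegativity forces $h_3(\x,\y) = 0$, so $4 h_2(\x,\y) h_4(\x,\y) - h_3(\x,\y)^2 = 0 \ge 0$ there as well. Thus $4 h_2 h_4 - h_3^2$ is PSD, completing (ii).

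The only step that requires genuine care — and thus the main obstacle — is this last degenerate branch in the "only if" argument: on the locus $\{h_2(\x,\y) = 0\}$ the discriminant inequality is vacuous and cannot be invoked, so one must instead argue that a non-constant affine function of $z$ is never globally nonnegative to conclude $h_3(\x,\y) = 0$ there. Everything else is a routine translation of the facts "a cubic with nonzero leading coefficient is unbounded below" and "a nonnegative quadratic has nonpositive discriminant" into statements about the coefficient polynomials $h_i(\x,\y)$, and the structural descriptions of $h_1,h_2,h_3,h_4$ following~(\ref{TRI}) play no role beyond guaranteeing homogeneity.
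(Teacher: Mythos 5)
Your proof is correct and follows essentially the same strategy as the paper: fix $(\x,\y)$, view $h$ as a univariate polynomial in $z$, and read off the conditions from nonnegativity of low-degree polynomials. In fact you are slightly more careful than the paper in the degenerate branch where $h_2(\x,\y)=0$ (where the discriminant reasoning alone is vacuous and one must instead note that a nonconstant affine map in $z$ cannot be globally nonnegative), and in observing that the cubic-coefficient argument needs no sign-change from homogeneity since a genuine cubic is automatically unbounded below.
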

	\begin{proof}
		Assume that $h$ is {PSD.}   Let $x$-variables and $y$-variables be zero and $z=1$.  Since $h$ is {PSD,} we have $h_0 \ge 0$.  Let $z = 0$.  Since $h$ is {PSD,} $h_4(\x, \y)$ should be {PSD.}
		
		Suppose now that $h_0 = 0$.
		
		First, we assume that $h$ is {PSD.}   Since $h_1(\x, \y)$ is a linear homogeneous polynomial of $(\x, \y)$, if~$h_1(\x, \y) \not \equiv 0$, we may choose adequate values of $(\x, \y)$ such that $h_1$ takes a negative value.  Let $z$ goes to infinity.  Then, we have $h(\x, \y, z) < 0$, contradicting with the assumption that $f$ is {PSD.}  Thus, $h_1(\x, \y) \equiv 0$.  Similarly, if~$h_0 = 0$, we can show that $h_2(\x, \y)$ is {PSD.}     Furthermore, if~$4h_2(\x, \y)h_4(\x, \y) - h_3(\x, \y)^2$ is not {PSD,} then there are some values $(\x^*, \y^*)$ such that the function $4h_2(\x^*, \y^*)h_4(\x^*, \y^*) - h_3(\x^*, \y^*)^2 < 0$.  Then, $h(\x^*, \y^*, z)$ is an indeterminate quadratic polynomial of $z$.  There is a value $z^*$ such that $h(\x^*, \y^*, z^*) < 0$, contradicting to the assumption that $h$ is {PSD.}   Thus, $4h_2(\x, \y)h_4(\x, \y) - h_3(\x, \y)^2$ must be~{PSD.}
		
		On the other hand, assume that conditions (i) and (ii) hold.   Then for any value $(\x^*, \y^*)$, $h(\x^*, \y^*, z)$ is a {PSD} quadratic polynomial of $z$.  Then we always have $h(\x, \y, z) \ge 0$, i.e.,~$h$ is {PSD.}
		The proof is complete.
	\end{proof}
	
	Thus, to~consider the {PSD-SOS} problem of tripartite quartic polynomials, we only need to investigate two types of tripartite quartic polynomials.  One type is that $h_0= 1$ and $h_4(\x, \y)$ is {PSD} in~(\ref{TRI}).  We call such a tripartite quartic polynomial a nondegenerated tripartite quartic polynomial.   Another type is that $h_0 = 0, h_1(\x, \y) \equiv 0$, $h_2(\x, \y)$, $h_4(\x, \y)$ and
	$4h_2(\x, \y)h_4(\x, \y) - h_3(\x, \y)^2$
	are {PSD} in~(\ref{TRI}). 
	We call such a tripartite quartic polynomial a degenerated tripartite quartic~polynomial.

	In fact, $h_0 = 0$ holds if and only if {$a_{mnmn} = 0$.}
	Moreover, in~the construction of the function $g$ in Theorem~\ref{BQ-TRI}, the~choice of indices $i \in \{1, \dots, m\}$ and $j \in \{1, \dots, n\}$ is arbitrary. Consequently, $h$ is degenerated whenever there exist $i$ and $j$ such that {$a_{ijij} = 0$.}

	Note that a degenerated tripartite quartic polynomial is {PSD.}  A natural question is as follows:  Is a degenerated tripartite quartic polynomial always {SOS?}

	\section{SOS Biquadratic~Polynomials\label{sec3}}
	
	{Let $\A=(a_{ijkl})$ be an $m\times n$ symmetric   biquadratic tensor~\cite{QDH09} that satisfies $a_{ijkl}=a_{klij}$.}
	We can rewrite the $m\times n$ biquadratic polynomial as follows:
	\begin{eqnarray}\label{equ:f=zMz}
		f(\vx,\vy) = \sum_{i,k=1}^m\sum_{j,l=1}^n a_{ijkl}x_{i}y_{j}x_{k}y_{l} =\vz^T ({B}+P(\Gamma))\vz:=\vz^T M(\Gamma)\vz,
	\end{eqnarray}
	where $\vz=\vx\otimes \vy\in \Re^{mn}$, {the matrix $B=(b_{st})\in \Re^{mn\times mn}$ is obtained by flattening the fourth-order tensor $\A$ into a two-dimensional representation, where the first two dimensions of the tensor are arranged as matrix rows and the last two dimensions as matrix columns.~Namely,}
	\begin{eqnarray*}
		{{B}}=\begin{bmatrix}
			{B}_{11} & {B}_{12} & \cdots & {B}_{1m} \\
			{B}_{21} & {B}_{22} & \cdots & {B}_{2m} \\
			\cdots  & \cdots   & \cdots & \cdots  \\
			{B}_{m1} & {B}_{m2} & \cdots & {B}_{mm} \\
		\end{bmatrix}, \ \    {B_{ik}} = \begin{bmatrix}
			a_{i1k1} & a_{i1k2}  & \cdots & a_{i1kn}  \\
			a_{i2k1} & a_{i2k2}  & \cdots & a_{i2kn} \\
			\cdots  & \cdots   & \cdots & \cdots  \\
			a_{ink1} & a_{ink2}  & \cdots & a_{inkn} \\
		\end{bmatrix}.  	
	\end{eqnarray*}
	{Here}, 		each element is given by  $b_{st} = a_{ijkl}=a_{klij}$, where  $s=(i-1)n+j$ and $t=(k-1)n+l$.
		
	{While}
		the coefficients of diagonal {(where $i=k$ and $j=l$)} and half-cross  {(where $i=k$ or $j=l$)} 
		terms 	  are uniquely determined by the symmetry of $B$, the~full-cross terms (where $i\neq k$ and $j\neq l$)  exhibit non-uniqueness, as~evidenced by the identity:
		\begin{eqnarray*}
			&&\left(a_{ijkl}+a_{klij}+a_{kjil}+a_{ijkl}\right) x_{i}y_{j}x_{k}y_{l} \\
			&=& 2\left(a_{ijkl}+\gamma \right)x_{i}y_{j}x_{k}y_{l}  +2\left(a_{kjil}-\gamma \right) x_{i}y_{j}x_{k}y_{l} 	
		\end{eqnarray*}
		for any $\gamma \in \Re$.
		Specifically, let us define the index mappings   $s_1=(i-1)n+j$, $t_1=(k-1)n+l$, and~$s_2=(k-1)n+j$, $t_2=(i-1)n+l$. The~quadratic form remains invariant under the transformation  $b_{s_1t_1}\rightarrow b_{s_1t_1}+\gamma$ and   $b_{s_2t_2}\rightarrow b_{s_2t_2}-\gamma$.
		Therefore, we use
		$\Gamma=(\gamma_{{t_it_j}}) \in \Re^{\binom{m}{2}\times \binom{n}{2}}$ to represent  a free parameter that captures the degrees of freedom in expressing the full cross terms, and~construct $P(\Gamma) = (P_{ij}(\Gamma))\in\Re^{mn\times mn}$ as follows:
	\begin{eqnarray*}
		P_{ij}(\Gamma) = \begin{cases}
			\gamma_{{t_it_j}}, & \text{ if } \exists i_1<i_2,j_1<j_2 \text{ s.t. } i=n_{i_1j_1}, j = n_{i_2j_2}, {t_i}=t_{i_1i_2}, {t_j}=t_{j_1j_2};\\
			\gamma_{{t_it_j}}, & \text{ if } \exists i_1>i_2,j_1>j_2 \text{ s.t. } i=n_{i_1j_1}, j = n_{i_2j_2}, {t_i}=t_{i_1i_2}, {t_j}=t_{j_1j_2};\\
			-\gamma_{{t_it_j}}, & \text{ if } \exists i_1<i_2,j_1>j_2 \text{ s.t. } i=n_{i_1j_1}, j = n_{i_2j_2}, {t_i}=t_{i_1i_2}, {t_j}=t_{j_1j_2};\\
			-\gamma_{{t_it_j}}, & \text{ if } \exists i_1>i_2,j_1<j_2 \text{ s.t. } i=n_{i_1j_1}, j = n_{i_2j_2}, {t_i}=t_{i_1i_2}, {t_j}=t_{j_1j_2};\\
			0, & \text{otherwise}.
		\end{cases}
	\end{eqnarray*}
	{Here},  $\binom{m}{2}=\frac{m(m-1)}{2}$,  $n_{i_1j_1}=(i_1-1)n+j_1$,  $t_{i_1i_2} = \frac{(i_1-1)(i_1-2)}{2}+i_2$ if $i_1>i_2$ and $t_{i_1i_2} = \frac{(i_2-1)(i_2-2)}{2}+i_1$ if $i_1<i_2$.

	{For} 	instance, when $m=n=2$, it holds that $\vz=[x_1y_1, x_1y_2, x_2y_1, x_2y_2]^{\top}$,
	\begin{equation*}
		B=\begin{bmatrix}
			a_{1111} & a_{1112}& a_{1121} & a_{1122}\\
			a_{1211} & a_{1212} &  a_{1221}  &  a_{1222}
			\\
			a_{2111}  &  a_{2112}  & a_{2121} &  a_{2122}
			\\
			a_{2211}  & a_{2212}  & a_{2221}  & a_{2222}
		\end{bmatrix} \text{ and } P(\gamma)=\begin{bmatrix}
			& & & \gamma \\
			& & -\gamma & \\
			& -\gamma & & \\
			\gamma& & &
		\end{bmatrix}.
	\end{equation*}

	Then we have the following~theorem.

	\begin{Thm}\label{thm:sos}
		Suppose that the $m\times n$ biquadratic polynomial $f$ expressed by \eqref{equ:f=zMz} is {PSD.}
		Then the following three statements are~equivalent.
		\begin{enumerate}
			\item[(i)] 	$f$ is {SOS,} with its {SOS} rank at most $mn$, and~the coefficients can be orthogonal to each other;
			\item[(ii)] there exists $\Gamma=(\gamma_{{t_it_j}}) \in \Re^{\binom{m}{2}\times \binom{n}{2}}$ such that  $M(\Gamma)$ is {PSD;}
			\item[(iii)] the following condition holds,
			\begin{equation}
				\max_{\Gamma \in \Re^{\binom{m}{2}\times \binom{n}{2}}}\min_{\vz^\top\vz = 1} \vz^{\top}M(\Gamma) \vz \ge 0.
			\end{equation}
		\end{enumerate}
	\end{Thm}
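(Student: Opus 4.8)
The plan is to prove the equivalence (i) $\Leftrightarrow$ (ii) by the Gram-matrix method for sums of squares, and then (ii) $\Leftrightarrow$ (iii) by an attainment argument for the associated semidefinite feasibility problem. Two structural facts drive the first equivalence. The first is that an $m\times n$ biquadratic form which is SOS is automatically a sum of squares of \emph{bilinear} forms: writing $f=\sum_k g_k^2$ and passing to the degree-$2$ homogeneous part of each $g_k$, split $g_k=g_k^{(2,0)}+g_k^{(1,1)}+g_k^{(0,2)}$ by bidegree in $(\vx,\vy)$; since $f$ has pure bidegree $(2,2)$, the bidegree-$(4,0)$ part $\sum_k\big(g_k^{(2,0)}\big)^2$ of $\sum_k g_k^2$ must vanish, forcing $g_k^{(2,0)}\equiv 0$, and symmetrically $g_k^{(0,2)}\equiv 0$, so each $g_k=g_k^{(1,1)}$ is bilinear. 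The second fact, already implicit in the discussion preceding the theorem, is that the affine family $\{\,B+P(\Gamma):\Gamma\in\Re^{\binom{m}{2}\times\binom{n}{2}}\,\}$ is \emph{precisely} the set of symmetric matrices $Q\in\Re^{mn\times mn}$ with $\vz^\top Q\vz=f(\vx,\vy)$ identically (where $\vz=\vx\otimes\vy$): the diagonal and half-cross entries of any such $Q$ are pinned down by the matching coefficients of $f$, while each full-cross quadruple $i_1<i_2,\ j_1<j_2$ imposes a single linear relation $Q_{s_1t_1}+Q_{s_2t_2}=\text{const}$ on its two conjugate entries, with solution set $(B_{s_1t_1}+\gamma,\ B_{s_2t_2}-\gamma)$ as $\gamma$ ranges over $\Re$; since distinct free scalars occupy pairwise-disjoint entries, $\Gamma\mapsto P(\Gamma)$ is a linear bijection onto this solution set.

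Granting these, (i) $\Rightarrow$ (ii) is immediate: if $f=\sum_k g_k^2$ with $g_k$ bilinear, then $g_k=\vc_k^\top\vz$ for some $\vc_k\in\Re^{mn}$, hence $f=\vz^\top\big(\sum_k\vc_k\vc_k^\top\big)\vz$, and by the characterization $\sum_k\vc_k\vc_k^\top=M(\Gamma)$ for some $\Gamma$, which is PSD. For (ii) $\Rightarrow$ (i), I would take a spectral decomposition $M(\Gamma)=\sum_{k=1}^{r}\lambda_k\vu_k\vu_k^\top$ with $r=\rank(M(\Gamma))\le mn$, $\lambda_k>0$, and $\{\vu_k\}$ orthonormal; then $f=\vz^\top M(\Gamma)\vz=\sum_{k=1}^{r}\big(\sqrt{\lambda_k}\,\vu_k^\top(\vx\otimes\vy)\big)^2$ exhibits $f$ as a sum of $r\le mn$ squares of bilinear forms whose coefficient vectors $\sqrt{\lambda_k}\,\vu_k$ are pairwise orthogonal, yielding (i) together with the claimed SOS-rank bound and orthogonality.

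For (ii) $\Leftrightarrow$ (iii), put $\phi(\Gamma):=\min_{\vz^\top\vz=1}\vz^\top M(\Gamma)\vz=\lambda_{\min}(M(\Gamma))$, which is concave and continuous in $\Gamma$ as an infimum of affine functions, and observe that $\phi(\Gamma)\ge 0$ says exactly $M(\Gamma)\succeq 0$. Then (ii) $\Rightarrow$ (iii) is trivial, and in (iii) $\Rightarrow$ (ii) the only real issue is that the supremum in (iii) be attained. I would establish this by a recession-cone argument: for $D\ne 0$ the matrix $P(D)$ is nonzero (injectivity) and symmetric with zero diagonal, hence has zero trace and therefore a strictly negative eigenvalue, so $\phi(\Gamma+tD)\to-\infty$ as $t\to+\infty$; consequently every nonempty super-level set $\{\Gamma:\phi(\Gamma)\ge c\}$ is closed, convex, and has trivial recession cone, hence is compact, and $\phi$ attains its maximum at some $\Gamma^\ast$. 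If that maximum is nonnegative, $M(\Gamma^\ast)\succeq 0$, which is (ii). (The hypothesis that $f$ is PSD is used only to keep the statement non-vacuous; each of (i)--(iii) already forces it.)

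I expect two steps to carry the real weight. The first is the combinatorial bookkeeping behind the Gram-matrix characterization: one must verify that the full-cross monomial $x_{i_1}y_{j_1}x_{i_2}y_{j_2}$ receives contributions precisely from the four index positions encoded in the four-case definition of $P(\Gamma)$, and that these positions are disjoint across distinct quadruples, so that the parametrization by $\Gamma$ is both exhaustive and free. The second, and the more easily overlooked point, is the attainment step in (iii) $\Rightarrow$ (ii): here it is essential, not merely cosmetic, that $P(\Gamma)$ has vanishing diagonal and that $\Gamma\mapsto P(\Gamma)$ is injective, since without boundedness of the super-level sets the supremum could equal $0$ without being attained, and (iii) would be strictly weaker than (ii).
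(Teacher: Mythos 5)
Your proof is correct and follows the same overall skeleton as the paper (Gram-matrix characterization for (i)\,$\Leftrightarrow$\,(ii), eigenvalue characterization for (ii)\,$\Leftrightarrow$\,(iii)), but you fill three gaps that the paper either skips or handles incorrectly. First, to force the SOS pieces to be bilinear, the paper invokes Reznick's Newton-polytope theorem, whereas you give a self-contained bidegree argument: the bidegree-$(4,0)$ part of $\sum_k g_k^2$ is $\sum_k (g_k^{(2,0)})^2$, a sum of squares that must vanish, hence each $g_k^{(2,0)}\equiv 0$, and symmetrically for $(0,4)$. Both work; yours avoids the external citation. Second, for (ii)\,$\Rightarrow$\,(i), the paper takes a Cholesky factorization $M(\Gamma)=CC^\top$ and asserts the columns of $C$ are orthogonal; this is false in general (the Cholesky factor is lower-triangular, not orthogonal-columned), so the paper does not actually establish the orthogonality clause of statement (i). Your spectral decomposition $M(\Gamma)=\sum_k\lambda_k\vu_k\vu_k^\top$ with orthonormal $\vu_k$ does establish it, and simultaneously gives the rank bound $r\le mn$, so your version is the one that matches the statement. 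Third, the paper proves only the pointwise equivalence $M(\Gamma)\succeq 0\Leftrightarrow\lambda_{\min}(M(\Gamma))\ge 0$ and then silently passes between $\exists\Gamma$ and $\max_\Gamma$; as you note, this is only legitimate if the max is attained, since $\sup_\Gamma\lambda_{\min}(M(\Gamma))$ could equal $0$ without any feasible $\Gamma$. Your recession-cone argument (each nonzero $P(D)$ has zero diagonal, hence zero trace, hence a strictly negative eigenvalue, so $\lambda_{\min}(M(\Gamma)+tP(D))\to-\infty$, so super-level sets of the concave function $\phi$ are compact) is exactly what is needed to justify the attainment, and it correctly uses the injectivity of $\Gamma\mapsto P(\Gamma)$, which you also take the trouble to verify via the disjoint-position structure of the full-cross entries. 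In short: same route, but your version is rigorous where the paper's proof has two soft spots (the orthogonality claim and the max-attainment step).
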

	\begin{proof}
		$(i)\Longrightarrow (ii)$  If $f$ is {SOS,} then there exists $T>0$ such that $f=\sum_{t=1}^Tf_t^2(\vx,\vy)$.
		By Theorem 1 in~\cite{Re78}, we obtain that ${\rm supp}(f_t)\subseteq {\frac12{\rm New}(f)}$, $\forall\, t=1,\dots,T$.    Here, {\rm New}$(f)$ is the Newton polytope of $f$.
		Therefore, we deduce that each $f_t$ must  be of the linear form \mbox{$f_t=\vc_t^{\top}\vz$}, where $\vc_t$ is a coefficient vector and $\vz=\vx\otimes \vy$.
		Let $C=[\vc_1,\dots,\vc_T]\in\Re^{mn\times T}$ denote the coefficient matrix. This immediately gives the quadratic representation  \mbox{$f = \vz^{\top} CC^{\top}\vz$}.
		Consequently, there exists $\Gamma$ such that  $M(\Gamma)=CC^{\top}$ is {PSD.}

		$(ii)\Longrightarrow (i)$ Since $M(\Gamma)$ is {PSD,} it admits a Cholesky decomposition. Specifically, there exists a matrix $C\in\Re^{mn\times mn}$ such that  ${M(\Gamma)}=CC^{\top}$.  Therefore,
		$f=\sum_{t=1}^{mn}(\vc_t^{\top}\vz)^2$ is an {SOS}  of tetranomials, with~  {SOS} rank at most $mn$, and~the {coefficients}
		are orthogonal to each~other.
		
		$(ii)\Longleftrightarrow (iii)$ Since $M(\Gamma)$ is symmetric,   its positive semidefiniteness is equivalent to the nonnegativity of all its eigenvalues. In~particular, $M(\Gamma)$ is {PSD} if and only if its smallest eigenvalue $\lambda_{\min} = \min_{\vz^\top\vz = 1} \vz^{\top}M(\Gamma) \vz$   is~nonnegative.
		
		This completes the proof.
	\end{proof}

	{
		Next, we use the  $2\times 2$ example on page 358 of Qi~et~al.~\cite{QDH09} to demonstrate that $B$ is not PSD, yet there exists $\gamma$ such that $f(\vx,\vy)$ is SOS.
		\begin{example}
			Given a  $2\times 2$ biquadratic tensor $\A$   with the following components:  $a_{1111}=1$, $a_{1112}=2$, $a_{1122}=4$, $a_{1212}=12$, $a_{2121}=12$, $a_{1222}=1$, $a_{1121}=2$, $a_{2122}=1$, and~$a_{2222}=2$. The~associated biquadratic polynomial can be expressed as follows:
			\begin{eqnarray*}
				f(\vx,\vy)&=&x_1^2y_1^2+4x_1^2y_1y_2+12x_1^2y_2^2+4x_1x_2y_1^2+16x_1x_2y_1y_2\\
				&&+2x_1x_2y_2^2+12x_2^2y_1^2+2x_2^2y_1y_2+2x_2^2y_2^2.
			\end{eqnarray*}
		{Then} we can obtain
			\begin{equation*}
				B=\begin{bmatrix}
					a_{1111} & a_{1112}& a_{1121} & a_{1122}\\
					a_{1211} & a_{1212} &  a_{1221}  &  a_{1222}
					\\
					a_{2111}  &  a_{2112}  & a_{212} &  a_{2122}
					\\
					a_{2211}  & a_{2212}  & a_{2221}  & a_{2222}
				\end{bmatrix}=\begin{bmatrix}
					1 & 2 & 2 & 4 \\
					2 & 12 & 4 & 1 \\
					2 & 4 & 12 & 1\\
					4 & 1 & 1 & 2
				\end{bmatrix}.
			\end{equation*}

			{The} 	matrix $B$ fails to be PSD, as~evidenced by its smallest eigenvalue of  $-2.6110$.
			However, by~selecting  $\gamma=-3$, we obtain a modified matrix $M(\gamma)$ that is PSD, with~its smallest eigenvalue now being $M(\gamma)$ is $0.2069$. This positive semidefiniteness is further confirmed by the existence of the following Cholesky decomposition
			\begin{equation*}
				M(\gamma) = \begin{bmatrix}
					1 & 2 & 2 & 1 \\
					2 & 12 & 7 & 1 \\
					2 & 7 & 12 & 1\\
					1 & 1 & 1 & 2
				\end{bmatrix}=CC^{\top}, \text{ with } C=
				\begin{bmatrix}
					1 &  0 &  0  &  0 \\
					2  &  2.8284  &  0  &   0 \\
					2  & 1.0607  & 2.6220  & 0 \\
					1  & -0.3536  & -0.2384  &  0.9045
				\end{bmatrix}
			\end{equation*}
			{Consequently},  $f(\vx,\vy)$ admits the following SOS expression
			\begin{eqnarray*}
				f(\vx,vy) &=& (x_1y_1+2x_1y_2+2x_2y_1+x_2y_2)^2+(2.8284x_1y_2+1.0607x_2y_1-0.3536x_2y_2)^2\\
				&& +(2.6220x_2y_1-0.2384x_2y_2)^2 +0.9045x_2y_2^2.
			\end{eqnarray*}
		\end{example}

	}
	
	Theorem~\ref{thm:sos} shows that if an $m \times n$ {PSD} biquadratic polynomial is {SOS,} then its {SOS} rank is at most $mn$, and~the {coefficients}
	can be orthogonal to each~other.

	It may be insightful to examine   $f$ in relation to  the  {PSDness} of $M(\Gamma)$.
	Define
	\begin{equation}
		S_{mn}=\{\vz:\vz =  \vx\otimes \vy, \vx\in\Re^m,\vy\in \Re^n\}.
	\end{equation}
	{Suppose} that $f$ is {PSD,} then there exists $\Gamma$ such that
	$\vz^{\top} M(\Gamma)\vz\ge 0$ for all $\vz\in S_{mn}$.
	{Denote $\ve_i(mn)$ as the $i$-th standard basis in $\Re^{mn}$.}
	It follows from $\ve_1(mn),\dots,\ve_{mn}(mn)\in S_{mn}$ that ${\rm dim}(S_{mn}) = mn$.   However,  $S_{mn}$ is not a convex set.
	Consequently, the~{PSD} property of $f$ does not necessarily guarantee that $f$ is {SOS}, in~general.

	\section{\boldmath$2 \times 2$ Biquadratic~Polynomials\label{sec4}}
	
	Given a   $2 \times 2$ biquadratic polynomials as follows:
	\begin{align}
		\nonumber	f(\x, \y) = & a_{11}x_1^2y_1^2 + a_{12}x_1^2y_2^2+a_{21}x_2^2y_1^2+a_{22}x_2^2y_2^2+bx_1x_2y_1y_2\\
		&+c_{x1}x_1^2y_1y_2+c_{x2}x_2^2y_1y_2+c_{y1}x_1x_2y_1^2+c_{y2}x_1x_2y_2^2, \label{equ:2x2_BQ}
	\end{align}
	where the first four terms are called {{diagonal}
		terms}, the~fifth term is called the {{full}-cross term}, and~the last four terms are called {{half}-cross terms}.   In Section~\ref{sec2}, we have shown that a $2 \times 2$ {PSD} biquadratic polynomial is always {SOS.}   In this section, we present some {SOS} forms of $2 \times 2$ {PSD} biquadratic~polynomials.
	
	We first  show that a   $2\times 2$ {PSD} biquadratic polynomial $f$ expressed by \eqref{equ:2x2_BQ}	can be reformulated equivalently to another {PSD} $2\times 2$ biquadratic polynomial with two neighbor half-cross terms and one full-cross~terms.
	
	\begin{Prop}\label{prop:3.1}
		Suppose that the $2\times 2$ biquadratic polynomial $f$ expressed by \eqref{equ:2x2_BQ} is {PSD.} Then
		$a_{11}, a_{12}, a_{21}, a_{22} \ge 0$, and~		$$4a_{11}a_{12} \ge c_{x1}^2,\ 4a_{11}a_{21} \ge c_{y1}^2,\ 4a_{12}a_{22} \ge c_{y2}^2,\ 4a_{21}a_{22} \ge c_{x2}^2.$$
		{Furthermore}, there is a $2\times 2$ {PSD} biquadratic polynomial $g$ with only three half-cross terms:
		\begin{align}
			\nonumber	g(\x, \y) = & \bar a_{11}x_1^2y_1^2 + \bar a_{12}x_1^2y_2^2+\bar a_{21}x_2^2y_1^2+\bar a_{22}x_2^2y_2^2+\bar bx_1x_2y_1y_2\\
			&+\bar c_{x1}x_1^2y_1y_2+\bar c_{y1}x_1x_2y_1^2+\bar c_{y2}x_1x_2y_2^2, \label{functiong}
		\end{align}
		such that if $g$ has an {SOS} form, then the {SOS} form of $f$ can be resulted.
	\end{Prop}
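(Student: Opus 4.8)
The plan is to derive the coefficient inequalities by restricting $f$ to coordinate subspaces, and to obtain $g$ from $f$ by a single invertible linear change of variables in the $\y$-block that annihilates the half-cross term $c_{x2}x_2^2y_1y_2$.

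For the inequalities, I would first evaluate $f$ at the four pairs of standard basis vectors: $f(\e_i,\e_j)=a_{ij}$, so PSD-ness of $f$ forces $a_{11},a_{12},a_{21},a_{22}\ge 0$. For the products, restrict $f$ to $x_2=0$ to get $f(x_1,0,y_1,y_2)=x_1^2\bigl(a_{11}y_1^2+c_{x1}y_1y_2+a_{12}y_2^2\bigr)$; PSD-ness then says the binary quadratic form $a_{11}y_1^2+c_{x1}y_1y_2+a_{12}y_2^2$ is nonnegative, hence its discriminant $c_{x1}^2-4a_{11}a_{12}\le 0$. The other three inequalities follow identically from the restrictions $y_2=0$, $y_1=0$, $x_1=0$, each isolating a nonnegative binary quadratic form in the surviving two variables.

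For the construction of $g$: if $a_{22}=0$ then $4a_{21}a_{22}\ge c_{x2}^2$ already forces $c_{x2}=0$, so $f$ is itself of the form \eqref{functiong} and we take $g=f$. If $a_{22}>0$, set $t=-\tfrac{c_{x2}}{2a_{22}}$ and define $g(\x,\vv):=f\bigl(\x,\,v_1,\,v_2+tv_1\bigr)$. Since $(y_1,y_2)\mapsto(v_1,v_2+tv_1)$ is an invertible linear map, $g$ is again a $2\times2$ biquadratic polynomial and is PSD (an invertible linear change of variables preserves positive semidefiniteness). Expanding and collecting terms, the coefficient of $x_2^2v_1v_2$ in $g$ receives contributions only from $a_{22}x_2^2y_2^2$ (namely $2ta_{22}$) and from $c_{x2}x_2^2y_1y_2$ (namely $c_{x2}$), and these sum to $0$ by the choice of $t$; one then checks that the only half-cross monomials appearing in $g$ are $x_1^2v_1v_2$, $x_1x_2v_1^2$ and $x_1x_2v_2^2$, with coefficients $\bar c_{x1}=c_{x1}+2ta_{12}$, $\bar c_{y1}=c_{y1}+bt+t^2c_{y2}$, $\bar c_{y2}=c_{y2}$, while the diagonal and full-cross coefficients are updated correspondingly. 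Renaming $v_1,v_2$ back to $y_1,y_2$, this is exactly the form \eqref{functiong}.

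Finally, for the SOS transfer: if $g$ is SOS, write $g=\sum_k q_k(\x,\vv)^2$; then $f(\x,\y)=g\bigl(\x,\,y_1,\,y_2-ty_1\bigr)=\sum_k q_k\bigl(\x,\,y_1,\,y_2-ty_1\bigr)^2$, which is an SOS representation of $f$ (and by Theorem~\ref{thm:sos} the $q_k$ may be taken bilinear, so the $q_k(\x,y_1,y_2-ty_1)$ are bilinear in $(\x,\y)$ and the SOS rank is preserved). The main thing to be careful about is the coefficient bookkeeping in the expansion of $g$---verifying that no half-cross monomial beyond the three listed ones survives---and handling the degenerate branch $a_{22}=0$ separately; I do not anticipate any genuinely hard step.
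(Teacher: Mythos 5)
Your proof is correct and takes essentially the same approach as the paper: derive the coefficient inequalities by restricting to coordinate subspaces, then eliminate one half-cross term by a linear shear in the $\y$-block. The only cosmetic difference is that you shift $y_2$ (using $a_{22}$ and branching on $a_{22}=0$) whereas the paper shifts $y_1$ (using $a_{21}$ and branching on $a_{21}=0$); by symmetry these are equivalent, and your coefficient bookkeeping and inverse-substitution argument for the SOS transfer are both correct.
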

	\begin{proof}
		Let $x_1=y_1=1$ and $x_2=y_2=0$.  Then we obtain that $a_{11} \ge 0$.  Similarly, we derive that  $a_{12}, a_{21}, a_{22} \ge 0$.   Next, setting $x_1 = 1$ and $x_2=0$ leads to the inequality  $4a_{11}a_{12} \ge c_{x1}^2$.  Similar arguments establish the remaining conditions:
		$4a_{11}a_{21} \ge c_{y1}^2$, $4a_{12}a_{22} \ge c_{y2}^2$ and $4a_{21}a_{22} \ge c_{x2}^2$.
		
		
		Suppose $a_{21} = 0$. It follows from  $4a_{21}a_{22} \ge c_{x2}^2$ that  $c_{x2} = 0$.   Then $f$ has only three half-cross terms~itself.
		
		{Now suppose} that $a_{21} > 0$.  Let $\bar x_1 = x_1$, $\bar x_2 = x_2$, $\bar y_2 = y_2$,
		$$\bar y_1 = y_1 + {c_{x2} \over 2a_{21}}y_2$$
		and $g(\bar \x, \bar \y) \equiv f(\x, \y)$. Then $g$ has only three half-cross terms
		as follows:\vspace{-3pt}
		\begin{align*}
			g(\bar \x, \bar \y) = & f(\x, \y)\\
			= & a_{11}\bar x_1^2\bar y_1^2 + \left(\frac{a_{11}c_{x2}^2}{4a_{21}^2}+a_{12}-\frac{c_{x1}c_{x2}}{2a_{21}}\right)\bar x_1^2\bar y_2^2+{a_{21}}\bar x_2^2\bar y_1^2+\left(a_{22}-\frac{c_{x2}^2}{4a_{21}}\right)\bar x_2^2\bar y_2^2\\
			&+\left(b-\frac{c_{y1}c_{x2}}{a_{21}}\right)\bar x_1\bar x_2\bar y_1\bar y_2+\left(c_{x1}-\frac{a_{11}c_{x2}}{2a_{21}}\right)\bar x_1^2\bar y_1\bar~y_2\\
			&+c_{y1}\bar x_1\bar x_2\bar y_1^2+\left(c_{y2} +\frac{c_{y1}c_{x2}^2}{4a_{21}^2}-\frac{bc_{x2}}{2a_{21}}\right)\bar x_1\bar x_2\bar y_2^2,
		\end{align*}
		which is in  the form of~(\ref{functiong}).
		This completes the proof.
	\end{proof}
	
	\begin{Prop}\label{prop:3.2}
		Suppose that the $2\times 2$ biquadratic polynomial $g$ expressed by \eqref{functiong} is {PSD.} Then, there is a $2\times 2$ {PSD} biquadratic polynomial $h$ containing only two half-cross terms are in neighbor positions  as follows:
		\begin{align}
			\nonumber	h(\x, \y) = & \hat a_{11}x_1^2y_1^2 + \hat a_{12}x_1^2y_2^2+\hat a_{21}x_2^2y_1^2+\hat a_{22}x_2^2y_2^2+\hat bx_1x_2y_1y_2\\
			&+\hat c_{x1}x_1^2y_1y_2+\hat c_{y1}x_1x_2y_1^2, \label{functionh}
		\end{align}
		such that if $h$ has an {SOS} form, then the {SOS} form of $g$ can be resulted.
	\end{Prop}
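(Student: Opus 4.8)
The plan is to mimic the proof of Proposition~\ref{prop:3.1}: I will remove the half-cross term $\bar c_{y2}x_1x_2y_2^2$ of $g$ --- the one ``opposite'' to $\bar c_{x1}x_1^2y_1y_2$ --- by an invertible linear change of the $x$-variables only, namely by completing a square inside the coefficient of $y_2^2$.

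First I would record the elementary inequalities forced by positive semidefiniteness. Applying Proposition~\ref{prop:3.1} to $g$ (equivalently, setting $y_1=0$ and using that the binary quadratic form $\bar a_{12}x_1^2+\bar c_{y2}x_1x_2+\bar a_{22}x_2^2$ is nonnegative) gives $\bar a_{11},\bar a_{12},\bar a_{21},\bar a_{22}\ge 0$, and in particular $4\bar a_{12}\bar a_{22}\ge\bar c_{y2}^2$. If $\bar a_{12}\bar a_{22}=0$, this forces $\bar c_{y2}=0$, so $g$ is already of the form~\eqref{functionh} and I take $h=g$.

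Next, assuming $\bar a_{22}>0$, I set $\hat x_1=x_1$, $\hat x_2=x_2+\tfrac{\bar c_{y2}}{2\bar a_{22}}x_1$, $\hat y_1=y_1$, $\hat y_2=y_2$, and define $h(\hat\x,\hat\y)\equiv g(\x,\y)$, i.e.\ substitute $x_2=\hat x_2-\tfrac{\bar c_{y2}}{2\bar a_{22}}\hat x_1$ into $g$. This substitution is linear in the $x$-variables alone and preserves total degrees, so $h$ is again a $2\times2$ biquadratic polynomial. Grouping $g$ by powers of the $y$-variables, the coefficient of $y_2^2$ is $\bar a_{12}x_1^2+\bar c_{y2}x_1x_2+\bar a_{22}x_2^2=\bar a_{22}\hat x_2^2+\bigl(\bar a_{12}-\tfrac{\bar c_{y2}^2}{4\bar a_{22}}\bigr)\hat x_1^2$, so the $\hat x_1\hat x_2\hat y_2^2$ term vanishes; and because $g$ has no $x_2^2y_1y_2$ term and the substitution does not touch the $y$-variables, no $\hat x_2^2\hat y_1\hat y_2$ term is created either. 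Hence $h$ retains only the two half-cross terms $\hat c_{x1}\hat x_1^2\hat y_1\hat y_2$ and $\hat c_{y1}\hat x_1\hat x_2\hat y_1^2$, which occupy neighbouring positions, so $h$ has the form~\eqref{functionh}.

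To finish, I note that $(x_1,x_2)\mapsto(\hat x_1,\hat x_2)$ is a linear bijection of $\Re^2$, so $h(\hat\x,\hat\y)=g(\x,\y)\ge0$ for all arguments, i.e.\ $h$ is PSD; and if $h=\sum_t h_t(\hat\x,\hat\y)^2$ is an SOS form, then substituting back expresses each $h_t$ as a linear form in the monomials $x_iy_j$, so $g=\sum_t h_t\bigl(x_1,\,x_2+\tfrac{\bar c_{y2}}{2\bar a_{22}}x_1,\,y_1,\,y_2\bigr)^2$ is an SOS form of $g$. I do not expect a genuine obstacle here; the only care needed is the degenerate-case bookkeeping (well-definedness of the shift, which is why the case $\bar a_{22}=0$ is split off) and verifying that precisely the intended half-cross term is removed while no new half-cross term appears --- both routine computations.
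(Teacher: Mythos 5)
Your proposal is correct and follows essentially the same route as the paper: split on whether $\bar a_{22}=0$ (forcing $\bar c_{y2}=0$, so $h=g$) or $\bar a_{22}>0$, and in the latter case apply the shear $\hat x_2=\bar x_2+\tfrac{\bar c_{y2}}{2\bar a_{22}}\bar x_1$ to complete the square in the coefficient of $y_2^2$. You supply a bit more bookkeeping (verifying no new $\hat x_2^2\hat y_1\hat y_2$ term appears and spelling out the SOS-transfer direction), but the change of variables and the case split are exactly those in the paper.
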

	\begin{proof}
		
		Suppose ${\bar a_{22}} = 0$.  It follows from  $4{\bar a_{12}\bar a_{22}} \ge {\bar c_{y2}}^2$ that  ${\bar c_{y2}} = 0$.   Then $g$ itself has only two half-cross terms, which are in neighbor~positions.
		
		{Now suppose} that $a_{22} > 0$.  Let $\hat x_1 = \bar x_1$,
		$$\hat x_2 =\bar x_2 + {\bar c_{y2} \over 2\bar a_{22}}\bar x_1,$$			$\hat y_1 = \bar y_1$, $\hat y_2 = \bar y_2$			and $h(\hat \x, \hat \y) \equiv g(\bar \x, \bar\y)$. Then $h$ has only two half-cross terms, which are in neighbor positions, i.e.,
		\begin{align*}
			h(\hat \x, \hat \y)	= &	g(\bar \x, \bar \y)  \\
			= & \left(\bar a_{11}+\frac{\bar a_{21}\bar c_{y2}^2}{4\bar a_{22}^2}-\frac{\bar c_{y1}\bar c_{y2}}{2\bar a_{22}}\right)\hat x_1^2\hat y_1^2 + \left(\bar a_{12}-\frac{\bar c_{y2}^2}{4\bar a_{22}}\right)\hat x_1^2\hat y_2^2+\bar a_{21}\hat x_2^2\hat y_1^2+\bar a_{22}\hat x_2^2\hat y_2^2\\
			&+\bar b\hat x_1\hat x_2\hat y_1\hat y_2+\left(\bar c_{x1}-{\frac{b\bar c_{y2}}{2\bar a_{22}}}\right)\hat x_1^2\hat y_1\hat y_2+\left(\bar c_{y1}-\frac{\bar a_{21}\bar c_{y2}}{\bar a_{22}}\right)\hat x_1\hat x_2\hat y_1^2,
		\end{align*}
		which is in the form of \eqref{functionh}.
		This completes the proof.
	\end{proof}
	
	Combining Propositions~\ref{prop:3.1} and \ref{prop:3.2}, we obtain  the following~theorem.
	
	\begin{Thm}
		Suppose that the $2\times 2$ biquadratic polynomial $f$ expressed by \eqref{equ:2x2_BQ} is {PSD.} Then
		$a_{11}, a_{12}, a_{21}, a_{22} \ge 0$, and~\begin{equation}
			4a_{11}a_{12} \ge c_{x1}^2,\ 4a_{11}a_{21} \ge c_{y1}^2,\ 4a_{12}a_{22} \ge c_{y2}^2,\ 4a_{21}a_{22} \ge c_{x2}^2.
		\end{equation}
		{Furthermore},
		there is a $2\times 2$ {PSD} biquadratic polynomial $h$ with only two half-cross terms as expressed by {(\ref{functionh}),}
		such that if $h$ has an {SOS} form, then the {SOS} form of $f$ can be resulted.
	\end{Thm}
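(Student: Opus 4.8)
The plan is to obtain $h$ by composing the two reductions already proved in Propositions~\ref{prop:3.1} and \ref{prop:3.2}; no genuinely new argument is needed. The inequalities $a_{11},a_{12},a_{21},a_{22}\ge 0$ together with $4a_{11}a_{12}\ge c_{x1}^2$, $4a_{11}a_{21}\ge c_{y1}^2$, $4a_{12}a_{22}\ge c_{y2}^2$ and $4a_{21}a_{22}\ge c_{x2}^2$ are exactly the first conclusion of Proposition~\ref{prop:3.1}, so I would simply invoke it.

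For the second assertion I would proceed in two steps. First, apply Proposition~\ref{prop:3.1} to the PSD polynomial $f$: this yields a PSD $2\times 2$ biquadratic polynomial $g$ of the form \eqref{functiong} with only three half-cross terms, obtained from $f$ by the invertible linear change of the $\y$-variables $\bar y_1 = y_1 + \tfrac{c_{x2}}{2a_{21}}y_2$, $\bar y_2 = y_2$, $\bar\x=\x$ (the identity when $a_{21}=0$), so that $g(\bar\x,\bar\y)\equiv f(\x,\y)$. Since $g$ is again PSD, apply Proposition~\ref{prop:3.2} to $g$: this yields a PSD $2\times 2$ biquadratic polynomial $h$ of the form \eqref{functionh} with only two half-cross terms in neighbour positions, obtained from $g$ by an invertible linear change of the $\x$-variables with $h(\hat\x,\hat\y)\equiv g(\bar\x,\bar\y)$. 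Composing the two substitutions writes $h$ directly in terms of $f$.

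Finally I would chain the ``if SOS then SOS'' implications: given an SOS form $h=\sum_t q_t^2$, Proposition~\ref{prop:3.2} turns it into an SOS form of $g$, and then Proposition~\ref{prop:3.1} turns that into an SOS form of $f$. Concretely one substitutes the invertible linear changes of variables back into the $q_t$, noting that a linear form in $\hat\x\otimes\hat\y$ pulls back to a linear form in $\x\otimes\y$, so every summand remains the square of an admissible quadratic form and the decomposition of $f$ is produced explicitly. The only point needing a word of care is invertibility of these substitutions, which holds since each is triangular with unit diagonal; and in the degenerate cases $a_{21}=0$ or $\bar a_{22}=0$ the corresponding half-cross coefficient already vanishes by the product inequalities, so that step is trivial. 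I do not expect any real obstacle beyond the bookkeeping already carried out in the two propositions.
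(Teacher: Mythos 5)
Your proposal is correct and matches the paper exactly: the paper states the theorem follows by combining Propositions~\ref{prop:3.1} and \ref{prop:3.2}, which is precisely your argument of applying Proposition~\ref{prop:3.1} to obtain $g$, then Proposition~\ref{prop:3.2} to obtain $h$, and chaining the SOS implications back through the two invertible triangular substitutions. No further comment is needed.
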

	
	{By Theorem~\ref{BQ-TRI}, any $2\times 2$ PSD biquadratic polynomial admits an SOS decomposition.} 
	Thus, in~the remaining part of this section, we consider a $2\times 2$ {PSD} biquadratic polynomial $f$, with~the form
	\begin{align}
		\nonumber	f(\x, \y) = & a_{11}x_1^2y_1^2 + a_{12}x_1^2y_2^2+a_{21}x_2^2y_1^2+a_{22}x_2^2y_2^2+bx_1x_2y_1y_2\\
		&+c_{x1}x_1^2y_1y_2{+c_{y1}}x_1x_2y_1^2, \label{equ:2x2_BQ_2halfcrossterms}
	\end{align}
	to identify its concrete {SOS} form.   In~Sections~\ref{sec4.1}--\ref{sec4.3}, we give constructive proofs for the {SOS} form of a $2 \times 2$ {PSD} biquadratic polynomials, in~three cases: I. $f$ has no half-cross terms; II. $f$ has only one half-cross term; III. $f$ has  two neighbor half-cross terms and no full-cross terms.
	{The case that the $2 \times 2$ bipartite polynomial with one full-cross term along with two neighbor  half-cross terms is somewhat complicated.  We do not consider this case~here.}

	\subsection{Case I: No Half-Cross~Terms\label{sec4.1}}
	
	If $f$ has no cross terms at all, the~case is trivial.   In~this subsection, we   assume that $f$ has one full-cross term but no half-cross~terms.
	
	\begin{Lem}\label{lem:bdmt_psd_sos}
		Let
		$$f(\x, \y) = a_{11} x_1^2y_1^2 + a_{12} x_1^2y_2^2 + a_{21}  x_2^2y_1^2 + a_{22} x_2^2y_2^2 - 4x_1x_2y_1y_2.$$
		{Then}
		the following three statements are equivalent:
		\begin{enumerate}
			\item[(a)] $f(\x, \y)$ is {PSD.}
			
			\item[(b)] $f(\x, \y)$ is  {SOS.}
			
			\item[(c)]   The parameters $a_{11}, a_{12}, a_{21}, a_{22}$ satisfy
			$$a_{11}, a_{12}, a_{21}, a_{22}  \ge 0 \text{ and } \sqrt{a_{11}a_{22}} + \sqrt{a_{12}a_{21}} \ge 2.$$\end{enumerate}
	\end{Lem}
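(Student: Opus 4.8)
The plan is to establish the equivalences through the chain $(b)\Rightarrow(a)$, $(a)\Rightarrow(b)$, $(b)\Rightarrow(c)$, $(c)\Rightarrow(b)$, carrying out the last one constructively so that an explicit SOS decomposition falls out. The first two implications are free: $(b)\Rightarrow(a)$ because an SOS polynomial is PSD, and $(a)\Rightarrow(b)$ is exactly the final assertion of Theorem~\ref{BQ-TRI}. So the real content is relating $(b)$ (equivalently $(a)$) to the explicit parameter condition $(c)$, and for this I would exploit the matrix representation $f=\vz^{\top}M(\Gamma)\vz$ of Section~\ref{sec3}.

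For this $f$ the matrix $M(\Gamma)$ is very sparse: with $\vz=(x_1y_1,x_1y_2,x_2y_1,x_2y_2)^{\top}$ and the full-cross coefficient equal to $-4$ (so $b=-4$ and all half-cross coefficients vanish in \eqref{equ:2x2_BQ}), the only off-diagonal entries of $M(\Gamma)$ are in positions $(1,4)$ and $(2,3)$, and they sum to $b/2=-2$. Writing $p$ for the $(1,4)$ entry, the $(2,3)$ entry is $-2-p$, and $p$ ranges over all of $\Re$ as $\Gamma$ does; after reordering $\vz$ as $(x_1y_1,x_2y_2,x_1y_2,x_2y_1)^{\top}$ the matrix $M(\Gamma)$ becomes block diagonal with the $2\times2$ blocks
\[
	\begin{bmatrix} a_{11} & p \\ p & a_{22} \end{bmatrix}
	\qquad\text{and}\qquad
	\begin{bmatrix} a_{12} & -2-p \\ -2-p & a_{21} \end{bmatrix}.
\]
The implication $(b)\Rightarrow(c)$ is then immediate: since $f$ is SOS it is PSD, so Theorem~\ref{thm:sos} gives a $\Gamma$, hence a $p$, with $M(\Gamma)$ PSD; positive semidefiniteness of the two blocks forces $a_{11},a_{12},a_{21},a_{22}\ge0$, $p^2\le a_{11}a_{22}$, and $(2+p)^2\le a_{12}a_{21}$, whence $\sqrt{a_{11}a_{22}}+\sqrt{a_{12}a_{21}}\ge|p|+|2+p|\ge2$ by the triangle inequality, which is $(c)$. (As an aside, $(c)\Rightarrow(a)$ can also be seen in one line from $a_{11}x_1^2y_1^2+a_{22}x_2^2y_2^2\ge2\sqrt{a_{11}a_{22}}\,|x_1x_2y_1y_2|$ and $a_{12}x_1^2y_2^2+a_{21}x_2^2y_1^2\ge2\sqrt{a_{12}a_{21}}\,|x_1x_2y_1y_2|$.)

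For $(c)\Rightarrow(b)$ I would observe that $(c)$ says exactly that the intervals $[-\sqrt{a_{11}a_{22}},\sqrt{a_{11}a_{22}}]$ and $[-2-\sqrt{a_{12}a_{21}},-2+\sqrt{a_{12}a_{21}}]$ meet — the binding inequality being $-\sqrt{a_{11}a_{22}}\le-2+\sqrt{a_{12}a_{21}}$, the other one holding automatically because the $a_{ij}$ are nonnegative. Choosing $p$ in this intersection makes both blocks PSD, and a Cholesky factorization of each block exhibits $f$ as a sum of at most four squares of homogeneous quadratic polynomials; when $a_{11},a_{12}>0$ this reads
\[
	f = a_{11}\Bigl(x_1y_1+\tfrac{p}{a_{11}}x_2y_2\Bigr)^2 + \tfrac{a_{11}a_{22}-p^2}{a_{11}}\,x_2^2y_2^2 + a_{12}\Bigl(x_1y_2-\tfrac{2+p}{a_{12}}x_2y_1\Bigr)^2 + \tfrac{a_{12}a_{21}-(2+p)^2}{a_{12}}\,x_2^2y_1^2,
\]
all coefficients nonnegative by the choice of $p$. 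The degenerate sub-cases would be disposed of directly: if $a_{11}=0$ then $(c)$ forces $a_{12}>0$ while the first block forces $p=0$; if $a_{12}=0$ then $(c)$ forces $a_{11}>0$ while the second block forces $p=-2$; and $a_{11}$ and $a_{12}$ cannot both vanish, since then $(c)$ fails. In each degenerate case the same factorization applies with one square deleted.

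The part I expect to absorb most of the effort is this final implication — verifying that the two intervals overlap precisely when $(c)$ holds, confirming that the Cholesky coefficients are nonnegative, and bookkeeping the $a_{11}=0$ and $a_{12}=0$ sub-cases — whereas everything else follows almost at once from the $2\times2$ block decomposition of $M(\Gamma)$ together with Theorems~\ref{BQ-TRI} and \ref{thm:sos}.
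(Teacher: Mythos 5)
Your argument is correct but follows a genuinely different route from the paper's. The paper proves the lemma self-containedly via the chain $(a)\Rightarrow(c)\Rightarrow(b)\Rightarrow(a)$: from PSD it splits into the sub-cases $\sqrt{a_{11}a_{22}}\ge 2$, $\sqrt{a_{12}a_{21}}\ge 2$, and both $<2$, and in each sub-case completes the square by hand to both extract the inequality and exhibit an explicit SOS form; it never needs Hilbert's theorem. You instead work through the Gram-matrix representation of Section~\ref{sec3}, observe that for this $f$ (no half-cross terms) the matrix $M(\Gamma)$ reorders into two independent $2\times 2$ blocks parametrized by a single scalar $p$, and translate PSDness of those blocks into the condition that two intervals centered at $0$ and $-2$ with radii $\sqrt{a_{11}a_{22}}$ and $\sqrt{a_{12}a_{21}}$ overlap, which is exactly $(c)$. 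This is cleaner and more conceptual than the paper's case-splitting, and the Cholesky factorization gives the same explicit decompositions in one stroke; your treatment of the degenerate sub-cases $a_{11}=0$, $a_{12}=0$ is correct. The one trade-off worth noting is that you close the cycle by invoking $(a)\Rightarrow(b)$ from Theorem~\ref{BQ-TRI}, i.e.\ Hilbert's theorem, because PSDness of $f$ on the Segre variety $S_{mn}$ does not directly force PSDness of some $M(\Gamma)$; the paper's case analysis avoids this dependence and so gives an independent, elementary verification of the PSD-implies-SOS claim in this special case, which the authors likely valued. Your parenthetical one-line proof of $(c)\Rightarrow(a)$ via AM--GM is fine but does not by itself repair this, since the missing link is still $(a)\Rightarrow(c)$ or $(a)\Rightarrow(b)$.
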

	{
		\begin{proof}
			$(a) \Rightarrow (c)$ Since $f$ is {PSD} when $x_1=y_1=1$  and $x_2=y_2=0$, we obtain  $a_{11}\ge0$. Similarly,   $a_{12},a_{21},a_{22}\ge0$.
			If  $\sqrt{a_{11}a_{22}} \ge 2$, then
			\begin{eqnarray*}
				f(\vx,\vy) = \left(a_{11}-\frac{4}{a_{22}}\right)x_1^2y_1^2+ a_{12} x_1^2y_2^2 + a_{21}  x_2^2y_1^2 +\left(\frac{2}{\sqrt{a_{22}}}x_1y_1-\sqrt{a_{22}}x_2y_2\right)^2.
			\end{eqnarray*}
			{{By}
				analogous reasoning, if}  $\sqrt{a_{12}a_{21}} \ge 2$, then
			\begin{eqnarray*}
				f(\vx,\vy) = a_{11}x_1^2y_1^2+\left(a_{12}-\frac{4}{a_{21}}\right)x_1^2y_2^2+\left(\frac{2}{\sqrt{a_{21}}}x_2y_1-\sqrt{a_{21}}x_2y_1\right)^2+a_{22}x_2^2y_2^2.
			\end{eqnarray*}
			
			Now we assume that $\sqrt{a_{11}a_{22}} < 2$ and $\sqrt{a_{12}a_{21}} < 2$. {It follows from $f$ is {PSD} when $x_2=y_2=1$, $\sqrt{a_{12}}x_1y_2=\sqrt{a_{21}}x_2y_1$, and~$x_1,y_1\rightarrow \infty$ that} $a_{11} > 0$.  {Following similar methodology, we obtain that} $a_{12},a_{21},a_{22}>0$.
			By direct computation, we  derive that
			\begin{eqnarray*}
				f(\vx,\vy)&=&\left(\sqrt{a_{11}}x_1y_2-\sqrt{a_{22}}x_2y_1\right)^2+\left(\frac{2-\sqrt{a_{11}a_{22}}}{\sqrt{a_{21}}}x_1y_2-\sqrt{a_{21}}x_2y_1\right)^2\\
				&&+ \left(a_{12}-\frac{(2-\sqrt{a_{11}a_{22}})^2}{a_{21}}\right)x_1^2y_2^2.
			\end{eqnarray*}
			{{When}
				the first two terms vanish, the~PSD  condition of $f$  implies}   $\sqrt{a_{11}a_{22}} + \sqrt{a_{12}a_{21}} \ge 2$.
			
			$(c) \Rightarrow (b)$ follows directly from the above three reformulations of $f$.
			
			$(b) \Rightarrow (a)$ is obvious.
		\end{proof}
	}

	Building upon Lemma~\ref{lem:bdmt_psd_sos}, we now establish the following~theorem.
	
	\begin{Thm}\label{cor:bdmt_psd_sos}
		Let
		$$f(\x, \y) = a_{11} x_1^2y_1^2 + a_{12} x_1^2y_2^2 + a_{21}  x_2^2y_1^2 + a_{22} x_2^2y_2^2 - {b} x_1x_2y_1y_2.$$
		{Then,} 		the following three statements are equivalent:
		\begin{enumerate}
			\item[(a)] $f(\x, \y)$ is {PSD.}
			
			\item[(b)] $f(\x, \y)$ is  {SOS.}
			
			\item[(c)]   The parameters $a_{11}, a_{12}, a_{21}, a_{22}$ satisfy
			\begin{equation}
				a_{11}, a_{12}, a_{21}, a_{22}  \ge 0, \text{ and } \sqrt{a_{11}a_{22}} + \sqrt{a_{12}a_{21}} \ge {|{b}| \over 2}.
		\end{equation}\end{enumerate}

	\end{Thm}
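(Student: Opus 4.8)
The plan is to reduce the general coefficient $b$ to the normalized value $b=4$ already handled by Lemma~\ref{lem:bdmt_psd_sos}, by means of an invertible change of variables that preserves positive semidefiniteness, the SOS property, and condition~(c) simultaneously.

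First I would dispose of the degenerate case $b=0$. Then $f$ is a linear combination of the pure squares $x_i^2y_k^2$, so it is PSD iff it is SOS iff $a_{11},a_{12},a_{21},a_{22}\ge 0$; and in that situation (c) holds automatically, since $\sqrt{a_{11}a_{22}}+\sqrt{a_{12}a_{21}}\ge 0=|b|/2$. Hence all three statements are equivalent to the nonnegativity of the diagonal coefficients.

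Next, assume $b\ne 0$. If $b<0$, I would first apply the substitution $x_1\mapsto -x_1$, which fixes the four diagonal coefficients and replaces the cross term $-bx_1x_2y_1y_2$ by $-|b|x_1x_2y_1y_2$; since this map is invertible, it suffices to treat $b>0$. For $b>0$, apply the scaling $x_1\mapsto (4/b)x_1$, again invertible, which transforms $f$ into
$$\tilde f(\x,\y)=\frac{16a_{11}}{b^2}x_1^2y_1^2+\frac{16a_{12}}{b^2}x_1^2y_2^2+a_{21}x_2^2y_1^2+a_{22}x_2^2y_2^2-4x_1x_2y_1y_2.$$
Because both substitutions are invertible, $f$ is PSD (resp.\ SOS) if and only if $\tilde f$ is PSD (resp.\ SOS), and an explicit SOS form of $\tilde f$ yields one of $f$ by reversing the substitution — this is what makes the reduction legitimate for the SOS part and not merely the PSD part.

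Finally I would invoke Lemma~\ref{lem:bdmt_psd_sos} for $\tilde f$: it is PSD iff SOS iff $\tfrac{16a_{11}}{b^2},\tfrac{16a_{12}}{b^2},a_{21},a_{22}\ge 0$ and $\sqrt{\tfrac{16a_{11}}{b^2}a_{22}}+\sqrt{\tfrac{16a_{12}}{b^2}a_{21}}\ge 2$. Since $16/b^2>0$, the first group of inequalities is equivalent to $a_{11},a_{12},a_{21},a_{22}\ge 0$, and dividing the last inequality by $4/b$ turns it into $\sqrt{a_{11}a_{22}}+\sqrt{a_{12}a_{21}}\ge b/2=|b|/2$, which is exactly~(c) for $f$. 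Hence (a) $\Leftrightarrow$ (b) $\Leftrightarrow$ (c). I do not anticipate a genuine obstacle here: the only points demanding care are verifying that the sign flip and the dilation are bijective (so that PSD and SOS transfer in both directions) and checking that the scalar $4/b$ rescales the square-root inequality in~(c) consistently, both of which are immediate.
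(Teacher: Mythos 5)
Your proof is correct, and it takes the same approach the paper implicitly uses: the paper presents Theorem~\ref{cor:bdmt_psd_sos} without a written proof, prefaced only by ``Building upon Lemma~\ref{lem:bdmt_psd_sos},'' and the intended argument is precisely the normalization you carry out. Your handling of the degenerate case $b=0$, the sign flip $x_1\mapsto -x_1$ for $b<0$, the dilation $x_1\mapsto(4/b)x_1$ for $b>0$, and the verification that condition~(c) rescales consistently (dividing $\sqrt{\tilde a_{11}\tilde a_{22}}+\sqrt{\tilde a_{12}\tilde a_{21}}\ge 2$ by $4/b$ gives $\sqrt{a_{11}a_{22}}+\sqrt{a_{12}a_{21}}\ge|b|/2$) are all accurate and supply exactly the routine details the paper omits.
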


	\subsection{Case II: Only One Half-Cross~Term\label{sec4.2}}
	
	In this subsection, we assume that $f$ has {one} half-cross term and one full-cross~term.
	
	\begin{Lem}\label{Lem:psd_2tail}
		Suppose that
		$$f(\x, \y) =  {a_{11} x_1^2y_1^2 + a_{12} x_1^2y_2^2 +   a_{21} x_2^2y_1^2+ a_{22} x_2^2y_2^2
			- 4 x_1y_1x_2y_2 - c_{y1} x_1x_2y_1^2.}$$
		{Then,} the following three statements are equivalent:
		\begin{enumerate}
			\item[(a)] $f(\x, \y)$ is {PSD.}
			
			\item[(b)] $f(\x, \y)$ is {SOS.}
			
			\item[(c)] There exist {$a_{111}, a_{211}\ge0$ and $a_{112}, a_{212}> 0$} such that
			$a_{11} = a_{111} + a_{112}$, \mbox{$a_{21} = a_{211} + a_{212}$},   $4a_{112} a_{212}=c_{y1}^2$, and~		$\sqrt{a_{111}a_{22}} + \sqrt{a_{12}a_{211}} \ge 2$.\end{enumerate}
	\end{Lem}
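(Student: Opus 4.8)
The three statements chain as $(c)\Rightarrow(b)\Rightarrow(a)\Rightarrow(c)$; note that $(a)\Rightarrow(b)$ by itself is already the (non‑constructive) content of Theorem~\ref{BQ-TRI}, so the substance is the explicit construction in $(c)\Rightarrow(b)$ and the claim $(a)\Rightarrow(c)$ that such a construction always exists. Throughout I view $f$ as a homogeneous quadratic form in $(y_1,y_2)$: with $A(\x)=a_{11}x_1^2-c_{y1}x_1x_2+a_{21}x_2^2$ and $C(\x)=a_{12}x_1^2+a_{22}x_2^2$ one has $f=A\,y_1^2-4x_1x_2\,y_1y_2+C\,y_2^2$. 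Setting $y_2=0$ and then $y_1=0$ shows that $f$ PSD forces $A\ge0$ and $C\ge0$ on $\Re^2$, i.e. $a_{11},a_{12},a_{21},a_{22}\ge0$ and $4a_{11}a_{21}\ge c_{y1}^2$; completing the square in $y_2$ then shows (modulo the measure‑zero locus $C=0$) that $f$ is PSD if and only if the binary quartic $Q(\x):=A(\x)C(\x)-4x_1^2x_2^2$ is nonnegative everywhere, equivalently, after dehomogenizing by $x_1=1,\ x_2=s$, that $R(s):=(a_{11}-c_{y1}s+a_{21}s^2)(a_{12}+a_{22}s^2)-4s^2\ge0$ for all $s\in\Re$. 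Since $c_{y1}=0$ reduces the lemma to Theorem~\ref{cor:bdmt_psd_sos}, I assume $c_{y1}\ne0$, so that $f$ PSD forces $a_{11},a_{21}>0$.

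For $(c)\Rightarrow(b)$: because $4a_{112}a_{212}=c_{y1}^2$, for the appropriate choice of sign
\[
\bigl(\sqrt{a_{112}}\,x_1y_1\pm\sqrt{a_{212}}\,x_2y_1\bigr)^2=a_{112}x_1^2y_1^2-c_{y1}x_1x_2y_1^2+a_{212}x_2^2y_1^2 ,
\]
so that $f=\bigl(\sqrt{a_{112}}\,x_1y_1\pm\sqrt{a_{212}}\,x_2y_1\bigr)^2+g$ with
\[
g=a_{111}x_1^2y_1^2+a_{12}x_1^2y_2^2+a_{211}x_2^2y_1^2+a_{22}x_2^2y_2^2-4x_1x_2y_1y_2 .
\]
This $g$ has exactly the form of Lemma~\ref{lem:bdmt_psd_sos} with nonnegative parameters $a_{111},a_{12},a_{211},a_{22}$, and the remaining hypothesis in $(c)$, namely $\sqrt{a_{111}a_{22}}+\sqrt{a_{12}a_{211}}\ge2$, is precisely condition $(c)$ of that lemma; hence $g$ is SOS, and therefore so is $f$. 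The implication $(b)\Rightarrow(a)$ is immediate.

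For $(a)\Rightarrow(c)$ the plan is to present the split as a member of a one‑parameter family and show that at least one member works. For $p>0$ set $a_{112}=p$, $a_{212}=c_{y1}^2/(4p)$, $a_{111}=a_{11}-p$, $a_{211}=a_{21}-c_{y1}^2/(4p)$; these satisfy $4a_{112}a_{212}=c_{y1}^2$ automatically, and all four lie in the ranges required by $(c)$ exactly when $p$ belongs to the interval $I=\bigl[\,c_{y1}^2/(4a_{21}),\,a_{11}\,\bigr]$, which is nonempty precisely because $4a_{11}a_{21}\ge c_{y1}^2$. By the computation of $(c)\Rightarrow(b)$ and Lemma~\ref{lem:bdmt_psd_sos}, the corresponding remainder $g_p$ is PSD if and only if $\phi(p):=\sqrt{(a_{11}-p)a_{22}}+\sqrt{a_{12}\bigl(a_{21}-c_{y1}^2/(4p)\bigr)}\ge2$, so $(c)$ is equivalent to $\max_{p\in I}\phi(p)\ge2$, and it remains to derive this from $R(s)\ge0$. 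Two structural facts organize the argument: $\phi$ is concave on $I$ (a sum of $\sqrt{a_{22}}\sqrt{a_{11}-p}$ and the concave function $\sqrt{a_{12}}\,\sqrt{a_{21}-c_{y1}^2/(4p)}$), so its maximum on $I$ is attained at an interior critical point or an endpoint; and one has the pointwise identity $R(s)=R_p(s)+\bigl(\sqrt p-\tfrac{c_{y1}}{2\sqrt p}s\bigr)^2\bigl(a_{12}+a_{22}s^2\bigr)$, where $R_p(s):=(a_{111}+a_{211}s^2)(a_{12}+a_{22}s^2)-4s^2$ is \emph{even} in $s$ and satisfies $R_p\ge0$ on $\Re$ if and only if $\phi(p)\ge2$.

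The heart of the matter, and the step I expect to be the main obstacle, is passing from ``$R\ge0$ globally'' to ``$R_p\ge0$ globally for some $p\in I$''. Since the square in the identity above is nonnegative and $a_{12}+a_{22}s^2\ge0$, one always has $R(s)\ge R_p(s)$ pointwise, hence $\max_{p\in I}\min_s R_p(s)\le\min_s R(s)$; the content is the reverse inequality, that the loss from peeling the square can be arranged to vanish where the remainder attains its minimum. I would obtain this either (i) constructively, choosing $p$ so that the zero of $\bigl(\sqrt p-\tfrac{c_{y1}}{2\sqrt p}s\bigr)^2$ coincides with the explicitly computable, even‑in‑$s$ minimizer of $R_p$ — a single scalar equation in $p$ whose solvability on $I$ should follow by comparing the two sides at the endpoints of $I$ and invoking $R\ge0$ — or (ii) by a concavity/extremality argument: assuming $\max_{p\in I}\phi(p)<2$ and using the stationarity condition at the maximizer (or the endpoint values of $\phi$) to exhibit an $s$ with $R(s)<0$, contradicting $(a)$. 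Finally the degenerate cases are disposed of separately: $a_{11}=0$ or $a_{21}=0$ forces $c_{y1}=0$, already excluded; and a maximizer $p$ at an endpoint of $I$ makes one of $a_{111},a_{211}$ vanish, so the inequality for $g_p$ collapses to a single product condition which is again exactly what Lemma~\ref{lem:bdmt_psd_sos} demands.
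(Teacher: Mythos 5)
Your reduction and the directions $(c)\Rightarrow(b)\Rightarrow(a)$ are correct and essentially match the paper: you peel off the square $\bigl(\sqrt{a_{112}}\,x_1y_1\pm\sqrt{a_{212}}\,x_2y_1\bigr)^2$ and reduce to Lemma~\ref{lem:bdmt_psd_sos}. Your pointwise identity $R(s)=R_p(s)+\bigl(\sqrt p-\tfrac{c_{y1}}{2\sqrt p}s\bigr)^2(a_{12}+a_{22}s^2)$ is also correct and is a clean dehomogenized version of the paper's bookkeeping.

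The gap is exactly where you flag it. For $(a)\Rightarrow(c)$ you must exhibit a $p\in I$ with $\phi(p)\ge2$, and you candidly say the key step --- passing from $R\ge0$ to $R_p\ge0$ for some $p\in I$ --- ``should follow'' from a solvability argument you do not carry out, or from an extremality argument you also do not carry out. As written, neither route is completed, so the implication is not proved. This is not a cosmetic omission: the whole lemma rests on this step, since $(c)\Rightarrow(b)$ is a one-line reduction and $(b)\Rightarrow(a)$ is trivial. Moreover, your route (i) as stated (``match the zero of the peeled square with the minimizer of $R_p$'') needs care: the minimizer of the even quartic $R_p$ depends on $p$ through $a_{111},a_{211}$, the resulting equation in $p$ is not visibly monotone, and you must also verify that the matching $p$ lands inside $I$ and that the endpoint degeneracies $a_{111}=0$ or $a_{211}=0$ do not break the argument. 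The concavity of $\phi$ that you invoke tells you where the maximum sits, but it gives no lower bound on the maximum, so it does not by itself close the gap.

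For comparison, the paper resolves this by a different, purely algebraic device. After the same split $a_{11}=a_{111}+a_{112}$, $a_{21}=a_{211}+a_{212}$, $4a_{112}a_{212}=c_{y1}^2$, it writes $f$ explicitly as a sum of three squares plus the single residual $\bigl(a_{12}-\tfrac{(2-\sqrt{a_{111}a_{22}})^2}{a_{211}}\bigr)x_1^2y_2^2$. It then chooses a specific evaluation point $(x_1,x_2,y_1,y_2)$ at which two of the squares vanish identically for every admissible split, and defines
$\phi(a_{112})=\sqrt{a_{111}}\,(2-\sqrt{a_{111}a_{22}})\,c_{y1}^2-4a_{211}a_{112}^2\sqrt{a_{22}}$
as exactly the condition that the third square also vanish at that point. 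Because $\phi\ge0$ at $a_{112}=c_{y1}^2/(4a_{21})$ (where $a_{211}=0$) and $\phi\le0$ at $a_{112}=a_{11}$ (where $a_{111}=0$), the intermediate value theorem gives a root $\eta_0\in I$; at $a_{112}=\eta_0$ the residual alone must be nonnegative, which is precisely $\sqrt{a_{12}a_{211}}\ge2-\sqrt{a_{111}a_{22}}$. The paper also handles the boundary cases $a_{22}=0$ and $a_{12}=0$ separately (there the residual collapses to a single product condition), and the symmetric case $a_{211}\ne0$ when $a_{111}=0$. If you want to salvage your proposal, the fastest way is to replace your ``route (i)/(ii)'' paragraph with this explicit $\phi$ and the sign check at the two endpoints of $I$, together with the $a_{12}=0$ and $a_{22}=0$ subcases.
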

	\begin{proof} If $c_{y1} = 0$, then the conclusion follows from Lemma \ref{lem:bdmt_psd_sos}.  Thus, we may assume that $c_{y1} \not = 0$.

		(a) $\Rightarrow$ (c) Because $f$ is {PSD} when $y_2 = 0$, we obtain that  $4a_{11} a_{21} \ge c_{y1}^2$.   Therefore, there exist {$a_{111}, a_{211}\ge0$ and $a_{112}, a_{212}> 0$} such that $a_{11} = a_{111} + a_{112}$, \mbox{$a_{21} = a_{211} + a_{212}$},   and~$4a_{112}a_{212} = c_{y1}^2$.
		In the following, we show $\sqrt{a_{111}a_{22}} + \sqrt{a_{12}a_{211}} \ge 2$.
		Let \mbox{$x_1 = \sqrt{a_{212}}>0$} and $x_2 = {\rm sgn}(c_{y1})\sqrt{a_{112}}\neq 0$.   Then,
		$${f}(\x, \y) =   \left(a_{111} a_{212} + a_{211} a_{112}\right)y_1^2 + \left(a_{22}a_{112}+a_{12}a_{212}\right) y_2^2
		-4 {\rm sgn}(c_{y1})\sqrt{a_{112}a_{212}}y_1y_2.$$
		{Since}
		$f$ is {PSD,}
		we see that $a_{111}$ and $a_{211}$ cannot be both zero, and~		$a_{22}$ and $a_{12}$ cannot be both zero.
		We consequently classify three scenarios based on  the parameters  $a_{22}$ and $a_{12}$:
		
		Case (i) $a_{22}=0$ {and $a_{12}>0$}.
		It follows from $f$ is {PSD} when $x_1=\sqrt{a_{21}}$, $x_2=\sqrt{a_{11}}$, $y_2=1$, and~$y_1\rightarrow \infty$ that  $a_{21}> {c_{y1}^2 \over 4a_{11}}$.
		By selecting the parameters as $a_{112} = a_{11}$, $a_{212} = {c_{y1}^2 \over 4a_{11}}$, $a_{211} = a_{21} - a_{212}$,   we obtain $a_{211}>0$ and
		\begin{eqnarray*}
			{f}(\x, \y) &=&
			\left(a_{12}-\frac{4}{a_{211}}\right)x_1^2y_2^2 +  \left(\frac{2}{\sqrt{a_{211}}}x_1y_2-\sqrt{a_{211}}x_2y_1\right)^2 \\
			&& + \left(\sqrt{a_{11}}x_1-{\rm sgn}(c_{y1})\sqrt{a_{212}}x_2 \right)^2y_1^2.
		\end{eqnarray*} 		
		{Let} $x_1 = \sqrt{a_{212}}$, $x_2 = {\rm sgn}(c_{y1})\sqrt{a_{112}}$, $y_2 = \sqrt{a_{211}}x_2$ and $y_1 = \sqrt{a_{12}}x_1$.  Then \mbox{$f(\x, \y) = 	 \left(a_{12}-\frac{4}{a_{211}}\right)x_1^2y_2^2$}. It follows from  $f$ is {PSD} that  $\sqrt{a_{12}a_{211}} \ge 2$.
		Consequently, we conclude that there  exist  parameters $a_{111}, a_{211}, a_{112}, a_{212}$    satisfying condition (c).

		Case (ii) $a_{12}=0$ {and $a_{22}>0$}.  This can be proved similarly as Case (i).
		
		Case (iii) $a_{22}>0$ and $a_{12} > 0$.
		Note that $a_{111}$ and $a_{211}$ cannot both be  zero.  Suppose  $a_{111} \not = 0$. {If   $\sqrt{a_{111}a_{22}}>2$, condition (c) hold directly.  Suppose that   $\sqrt{a_{111}a_{22}}<2$.} Then it follows from $f$ is {PSD} that $a_{211}>0$ and\vspace{-3pt}
		\begin{align*}
			{f}(\x, \y) = &
			(\sqrt{a_{111}}x_1y_1-\sqrt{a_{22}}x_2y_2)^2 +\left(a_{12}-\frac{(2-\sqrt{a_{111}a_{22}})^2}{a_{211}}\right)x_1^2y_2^2\\
			&+\left(\frac{2-\sqrt{a_{111}a_{22}}}{\sqrt{a_{211}}}x_1y_2-\sqrt{a_{211}}x_2y_1\right)^2 \\
			& + \left(\sqrt{a_{112}}x_1-{\rm sgn}(c_{y1})\sqrt{a_{212}}x_2 \right)^2y_1^2.
		\end{align*}
		
		Let  $x_1 = \sqrt{a_{212}}$, $x_2 = {\rm sgn}(c_{y1})\sqrt{a_{112}}$, $y_1 = \sqrt{a_{22}}x_2$ and $y_2 = \sqrt{a_{111}}x_1$.  	{Then the first two terms are zeros. Furthermore, define}
		$$\phi(a_{112}) = \sqrt{a_{111}}(2-\sqrt{a_{111}a_{22}})c_{y1}^2 - 4a_{211}a_{112}^2\sqrt{a_{22}}.$$ {It} follows from  $a_{111} = a_{11} - a_{112}$, $a_{211} =  a_{21} - a_{212}$, and~$a_{212} = {c_{y1}^2 \over 4a_{112}}$ that $\phi$ is a function of $a_{112}$.
		Since  $a_{212} = {c_{y1}^2 \over 4a_{112}}\le a_{21}$, it yields that  $a_{112}\in \left[\frac{c_{y1}^2}{4a_{21}}, a_{11}\right]$.   When  $a_{112} = \frac{c_{y1}^2}{4a_{21}}$, we derive that  $a_{211} = 0$ and $\phi\left(\frac{c_{y1}^2}{4a_{21}}\right) \ge 0$.
		When $a_{112} = a_{11}$,  we obtain that  $a_{111} = 0$ and $\phi(a_{11}) \le 0$.   Thus, there is $\eta_0 \in {\left[\frac{c_{y1}^2}{4a_{21}}, a_{11}\right]}$ such that $\phi(a_{112}) = 0$. Then by setting $a_{112} = \eta_0$, $a_{212} = {c_{y1}^2 \over 4\eta_0}$, we obtain that $f(\x, \y)= \left(a_{12}-\frac{(2-\sqrt{a_{111}a_{22}})^2}{a_{211}}\right)x_1^2y_2^2$. It follows from $f$ is {PSD} that
		$\sqrt{a_{111}a_{22}}+\sqrt{a_{12}a_{211}} \ge 2$.
		Consequently, we derive that   there  exist  parameters $a_{111}, a_{211}, a_{112}, a_{212}$, specifically chosen as $a_{112} = \eta_0$, $a_{212} = {c_{y1}^2 \over 4\eta_0}$, that satisfy (c).

		{The case when $a_{211} \not = 0$ follows by an analogous argument with $a_{111} \not = 0$.}
		
		(c) $\Rightarrow$ (b) follows directly from the above  reformulations of $f$.

		(b) $\Rightarrow$ (a)    follows directly from the~definitions.
		
		This completes the proof.
	\end{proof}
	
	{Furthermore}, we establish an equivalent condition based on  Lemma~\ref{Lem:psd_2tail} that depends solely on the coefficients of $f$.
	\begin{Thm}
		Suppose that
		$$f(\x, \y) = {a_{11}} x_1^2y_1^2 + a_{22} x_2^2y_2^2 + {a_{12}}  x_1^2y_2^2 + {a_{21}} x_2^2y_1^2
		- b x_1y_1x_2y_2 - c_{y1} x_1x_2y_1^2.$$
		{Let}
		\[g(a_{112})=\sqrt{\left(a_{11}- a_{112}\right)a_{22}} + \sqrt{a_{12}\left(a_{21}-\frac{c_{y1}^2}{4a_{112}}\right)}.\]
		and $a_{112}^*\in \left(\frac{c_{y1}^2}{4a_{21}}, a_{11}\right)$ be the unique parameter such that {the derivative} $g'(a_{112}^*)=0$.
		Then the following three statements are equivalent:
		\begin{enumerate}
			\item[(a)] $f(\x, \y)$ is {PSD.}
			
			\item[(b)] $f(\x, \y)$ is {SOS.}
			
			\item[(c)] It holds that
			$g(a_{112}^*)\ge {\frac{|b|}{2}}$.\end{enumerate}
	\end{Thm}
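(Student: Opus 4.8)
The plan is to bootstrap from Lemma~\ref{Lem:psd_2tail}, which already packages the equivalence of (a) and (b) together with a ``decomposition'' criterion, and then to convert that existence criterion into the single-variable condition (c) via a one-dimensional optimization. First observe that the standing hypothesis --- that there is an interior $a_{112}^*\in(\frac{c_{y1}^2}{4a_{21}},a_{11})$ with $g'(a_{112}^*)=0$ --- already puts us in the regime $a_{11},a_{12},a_{21},a_{22}>0$, $c_{y1}\neq 0$, and $4a_{11}a_{21}>c_{y1}^2$ (if, e.g., $c_{y1}=0$ then $g$ is strictly monotone and has no interior critical point, and that case is covered by Theorem~\ref{cor:bdmt_psd_sos}). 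So assume we are in this regime.

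The first real step is to normalize the full-cross coefficient to $-4$. If $b=0$, then $f=[a_{11}x_1^2+a_{21}x_2^2-c_{y1}x_1x_2]y_1^2+[a_{12}x_1^2+a_{22}x_2^2]y_2^2$ is visibly a sum of squares (both brackets are PSD binary quadratic forms, by $4a_{11}a_{21}>c_{y1}^2$ and $a_{12},a_{22}>0$), while (c) reads $g(a_{112}^*)\ge 0$, which always holds; so all three statements are true. If $b\neq 0$, the invertible substitution $x_1\mapsto\frac{4}{|b|}x_1$, followed by $x_1\mapsto-x_1$ when $b<0$, carries $f$ to a polynomial $\tilde f$ of the same shape whose full-cross term is $-4x_1x_2y_1y_2$ and whose coefficients are $\tilde a_{11}=\frac{16}{b^2}a_{11}$, $\tilde a_{12}=\frac{16}{b^2}a_{12}$, $\tilde a_{21}=a_{21}$, $\tilde a_{22}=a_{22}$, $\tilde c_{y1}^2=\frac{16}{b^2}c_{y1}^2$; hence $f$ is PSD (resp.\ SOS) if and only if $\tilde f$ is.

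Now apply Lemma~\ref{Lem:psd_2tail} to $\tilde f$. Since its condition (c) forces $4a_{112}a_{212}=\tilde c_{y1}^2$, the decomposition there is parametrized by $a_{112}$ alone through $a_{212}=\frac{\tilde c_{y1}^2}{4a_{112}}$, $a_{111}=\tilde a_{11}-a_{112}$, $a_{211}=\tilde a_{21}-\frac{\tilde c_{y1}^2}{4a_{112}}$, and the constraints $a_{111}\ge 0$, $a_{211}\ge 0$ say exactly that $a_{112}$ ranges over a closed interval. Putting $a_{112}=\frac{16}{b^2}t$, a direct computation collapses $\frac{\tilde c_{y1}^2}{4a_{112}}$ to $\frac{c_{y1}^2}{4t}$, identifies the interval for $t$ as $[\frac{c_{y1}^2}{4a_{21}},a_{11}]$, and rewrites $\sqrt{a_{111}\tilde a_{22}}+\sqrt{\tilde a_{12}a_{211}}\ge 2$ as $\frac{4}{|b|}g(t)\ge 2$, i.e.\ $g(t)\ge\frac{|b|}{2}$. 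Therefore (a) $\Leftrightarrow$ (b) $\Leftrightarrow$ ( there is $t\in[\frac{c_{y1}^2}{4a_{21}},a_{11}]$ with $g(t)\ge\frac{|b|}{2}$ ) $\Leftrightarrow$ $\max_{t\in[\frac{c_{y1}^2}{4a_{21}},\,a_{11}]}g(t)\ge\frac{|b|}{2}$; for the ``SOS'' direction one writes $f$ as the square $(\sqrt{a_{112}}x_1-{\rm sgn}(c_{y1})\sqrt{a_{212}}x_2)^2y_1^2$ plus an SOS produced by Theorem~\ref{cor:bdmt_psd_sos}.

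It remains to show $\max_t g(t)=g(a_{112}^*)$. On $(\frac{c_{y1}^2}{4a_{21}},a_{11})$ write $g(t)=\sqrt{a_{22}}\,\sqrt{a_{11}-t}+\sqrt{a_{12}}\,\sqrt{a_{21}-\frac{c_{y1}^2}{4t}}$; the first summand is strictly concave since $t\mapsto\sqrt{a_{11}-t}$ has strictly negative second derivative, and the second is strictly concave because $t\mapsto a_{21}-\frac{c_{y1}^2}{4t}$ is concave and increasing for $t>0$ while $\sqrt{\cdot}$ is concave and increasing. As $a_{12},a_{22}>0$, $g$ is strictly concave, so $g'$ is continuous and strictly decreasing on the open interval; moreover $g'(t)\to+\infty$ as $t\downarrow\frac{c_{y1}^2}{4a_{21}}$ (the derivative of the second summand blows up while the first stays bounded, as $\frac{c_{y1}^2}{4a_{21}}<a_{11}$) and $g'(t)\to-\infty$ as $t\uparrow a_{11}$. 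Hence $g'$ has exactly one zero $a_{112}^*$ in the open interval, $g$ increases up to $a_{112}^*$ and decreases after it, so $\max_t g(t)=g(a_{112}^*)$, and combining with the previous paragraph yields (a) $\Leftrightarrow$ (b) $\Leftrightarrow$ (c). The reduction to Lemma~\ref{Lem:psd_2tail} is essentially bookkeeping; the step that needs the most care is the concavity/boundary analysis that pins the maximum of $g$ at the interior critical point $a_{112}^*$ (and, if one wanted to drop the regime assumptions, the degenerate sub-cases would have to be handled directly as in Cases~(i)--(iii) of the proof of Lemma~\ref{Lem:psd_2tail}).
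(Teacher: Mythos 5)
Your proof is correct and follows essentially the same route as the paper: reduce to Lemma~\ref{Lem:psd_2tail}, reparametrize the decomposition by $a_{112}$ alone, and then show the one-variable function $g$ is unimodal with interior maximizer $a_{112}^*$ (the paper does this by noting both summands of $g'$ are monotone and blow up at the respective endpoints, which is equivalent to your concavity-plus-boundary argument). In fact your write-up is more complete than the paper's in two respects the paper leaves implicit: you spell out the rescaling $x_1\mapsto\frac{4}{|b|}x_1$ that converts the full-cross coefficient $-b$ into the normalized $-4$ of Lemma~\ref{Lem:psd_2tail} (the paper silently replaces the threshold $2$ by $|b|/2$), and you prove both implications (a)$\Rightarrow$(c) and (c)$\Rightarrow$(b) whereas the paper only states the forward direction explicitly.
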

	\begin{proof}
		Based on Lemma~\ref{Lem:psd_2tail}, we obtain that   $a_{11} = a_{111} + a_{112}$, $a_{21} = a_{211} + a_{212}$,   \mbox{$4a_{112}a_{212} = c_{y1}^2$}, and~		\begin{eqnarray*}
			g(a_{112})=	 \sqrt{\left(a_{11}- a_{112}\right)a_{22}} + \sqrt{a_{12}\left(a_{21}-\frac{c_{y1}^2}{4a_{112}}\right)}=\sqrt{a_{111}a_{22}} + \sqrt{a_{12}a_{211}}.
		\end{eqnarray*}
		{For} any  $a_{112}\in \left(\frac{c_{y1}^2}{4a_{21}}, a_{11}\right)$, we may derive {the derivative} of $g$ as follows:
		\begin{equation*}
			g'(a_{112}) = -\frac{\sqrt{a_{22}}}{2\sqrt{a_{11}- a_{112}}} + \frac{\sqrt{a_{12}}\frac{c_{y1}^2}{4a_{112}^2}}{2\sqrt{a_{21}-\frac{c_{y1}^2}{4a_{112}}}}.
		\end{equation*}
		{Here}, the~first term $\frac{\sqrt{a_{22}}}{2\sqrt{a_{11}- a_{112}}}$ is monotonously increasing to infinity and the second  term $\frac{\sqrt{a_{12}}\frac{c_{y1}^2}{4a_{112}^2}}{2\sqrt{a_{21}-\frac{c_{y1}^2}{4a_{112}}}}$ is monotonously decreasing from  infinity. Therefore, there is one unique parameter $a_{112}^*\in \left(\frac{c_{y1}^2}{4a_{21}}, a_{11}\right)$ such that $g'(a_{112}^*)=0$. Furthermore, $g(a_{112})$ is monotonously increasing in $a_{112}\in \left(\frac{c_{y1}^2}{4a_{21}}, a_{112}^*\right)$ and then  monotonously {decreasing} in $a_{112}\in \left(a_{112}^*, a_{11}\right)$.
		Consequently, $g(a_{112}^*) = \max_{a_{112}\in \left[\frac{c_{y1}^2}{4a_{21}}, a_{11}\right]} g(a_{112})$.
		Therefore, if~$f$ is {PSD,} then it holds that $g(a_{112}^*)\ge \frac{|b|}{2}$.
	\end{proof}

	\subsection{Case III: Two Neighbor Half-Cross Terms and No Full-Cross~Terms\label{sec4.3}}
	
	In this subsection, we study the case of $2 \times 2$ biquadratic polynomials with  two half-cross terms  in the neighbor sides and no full-cross term.  The~case that the two half-cross terms in the opposite sides and no full-cross term is somewhat trivial.  Consider
	\begin{equation} \label{NewCase-1}
		f(\x, \y) = a_{11}x_1^2y_1^2 + a_{12}x_1^2y_2^2+a_{21}x_2^2y_1^2+a_{22}x_2^2y_2^2
		+2c_xx_1^2y_1y_2+{2c_y}x_1x_2y_1^2.
	\end{equation}
	{Here}, we denote $c_x = {c_{x1} \over 2}$ and $c_y = {c_{y1} \over 2}$ to simplify our notation.
	Suppose that $c_x \not = 0$ and $c_y \not = 0$, and~$f$ is {PSD.}   Then we derive that
	$$a_{11}a_{12} \ge c_{x}^2 > 0,\ a_{11}a_{21} \ge c_{y}^2 > 0,$$
	$a_{11}, a_{12}, a_{21} > 0$ and $a_{22} \ge 0$.
	
	For simplicity, we may replace $x_2$ and $y_2$ by ${\bar x_2 \over \sqrt{a_{21}}}$ and ${\bar y_2 \over \sqrt{a_{12}}}$, respectively, and~assume that $a_{12} = a_{21} = 1$, i.e.,
	\begin{equation} \label{NewCase-1-1}
		f(\x, \y) = a_{11}x_1^2y_1^2 + x_1^2y_2^2+x_2^2y_1^2+a_{22}x_2^2y_2^2
		+2c_{x}x_1^2y_1y_2+2c_{y}x_1x_2y_1^2.
	\end{equation}
	{We} also may assume $c_x > 0$ (resp. $c_y > 0$), since otherwise we could replace $y_1$ with $-y_1$ (resp. $x_1$ with $-x_1$).
	Without loss of generality, assume that {$c_{x} \ge c_{y}>0$.}
	
	If $a_{11} \ge {c_{x}^2} + {c_{y}^2}$, then
	\begin{align*}
		f(\x, \y) = &  \left(a_{11}-{c_{x}^2} - {c_{y}^2}\right)x_1^2y_1^2 + x_1^2\left(c_{x} y_1+y_2\right)^2
		+ \left(c_yx_1+x_2\right)^2y_1^2 +a_{22}x_2^2y_2^2,
	\end{align*}
	which gives the {SOS} form of $f$ explicitly.

	When $a_{11} =  c_{x}^2$,
	$f(\x, \y)$ is not {PSD} for any $a_{22} > 0$.   This can be seen by letting $y_1 = 1$, $y_2 = -c_{x}$, $x_1 = - c_y$ and
	$$ 0 < x_2 < {2c_{y}^2 \over 1 + {a_{22}c_{x}^2}}.$$
	
	\begin{Thm}\label{Thm:case3}
		Suppose that the $2 \times 2$ biquadratic polynomial  $f$  is expressed by~(\ref{NewCase-1-1}), where \mbox{${c_{x}^2} < a_{11} < {c_{x}^2} + {c_{y}^2}$}, $a_{22}>0$, and~$c_x\ge c_y>0$.
		Then the following three statements are equivalent:
		\begin{enumerate}
			\item[(a)] $f(\x, \y)$ is {PSD.}
			
			\item[(b)] $f(\x, \y)$ is {SOS.}
			
			\item[(c)]   It holds that
			\[a_{22}\ge \begin{cases}
				\frac{8c_y}{\left(3c_y-\sqrt{9c_y^2-4a_{11}}\right)^3}-\frac{4}{\left(3c_y-\sqrt{9c_y^2-4a_{11}}\right)^2},  & \text{ if } c_x=c_y \text{ and } \frac54 c_y^2\le a_{11}\le 2c_y^2,\\  	 \frac{1}{a_{11}-c_x^2}-\frac{4}{\left(3c_y-\sqrt{9c_y^2-4a_{11}}\right)^2},  & \text{ if } c_x=c_y \text{ and }  c_y^2\le a_{11}\le \frac54c_y^2,\\
				\frac{(\gamma_1^*)^2(1-\gamma_1^*)(2-\gamma_1^*)^2}{((\gamma_1^*-1)c_x+c_y)(c_x+(\gamma_1^*-1)c_y)}, & \text{ if } c_x>c_y,
			\end{cases}
			\]
			where $\gamma_1^*\in \left(\frac{c_x-c_y}{c_x},1\right)$ is the root  of  function $\Omega$ defined by the following equation
		\begin{equation}\label{equ:Omega}
					\Omega(\gamma_1):=a_{11}\gamma_1^2(2-\gamma_1)^2-3(1-\gamma_1)^3c_xc_y+2(1-\gamma_1)^2(c_x^2+c_y^2)+(1-\gamma_1)c_xc_y-c_x^2-c_y^2=0.
		\end{equation}
	\end{enumerate}
	\end{Thm}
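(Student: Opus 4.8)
The plan is to reduce positive semidefiniteness of $f$ to a one‑parameter semidefinite feasibility problem and then solve that problem in closed form. The implication $(b)\Rightarrow(a)$ is trivial, and $(a)\Rightarrow(b)$ is already supplied by Theorem~\ref{BQ-TRI}, so the whole content is $(a)\Leftrightarrow(c)$. First I would invoke Theorem~\ref{thm:sos}: in the notation of~\eqref{equ:f=zMz}, writing $\vz=(x_1y_1,x_1y_2,x_2y_1,x_2y_2)^\top=\vx\otimes\vy$, one has $f=\vz^\top M(\gamma)\vz$ with
$$M(\gamma)=\begin{pmatrix} a_{11} & c_x & c_y & \gamma \\ c_x & 1 & -\gamma & 0 \\ c_y & -\gamma & 1 & 0 \\ \gamma & 0 & 0 & a_{22}\end{pmatrix},$$
and, combining Theorems~\ref{BQ-TRI} and~\ref{thm:sos}, $f$ is PSD (equivalently SOS) if and only if $M(\gamma)\succeq 0$ for some $\gamma\in\Re$. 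Since $a_{22}>0$, taking the Schur complement of the $(4,4)$ entry reduces this to asking whether
$$M'(\gamma)=\begin{pmatrix} a_{11}-\gamma^2/a_{22} & c_x & c_y \\ c_x & 1 & -\gamma \\ c_y & -\gamma & 1\end{pmatrix}\succeq 0$$
for some $\gamma$. Running through the principal minors of $M'(\gamma)$ and using $c_x\ge c_y>0$, this is equivalent to the three conditions $\gamma^2\le 1$, $\;a_{11}-\gamma^2/a_{22}\ge c_x^2$, and $\det M'(\gamma)=\bigl(a_{11}-\tfrac{\gamma^2}{a_{22}}\bigr)(1-\gamma^2)-c_x^2-c_y^2-2\gamma c_x c_y\ge 0$; moreover, for $|\gamma|<1$ the determinant inequality forces $a_{11}-\gamma^2/a_{22}\ge c_x^2+(c_y+\gamma c_x)^2/(1-\gamma^2)$, so the middle condition is automatic there and matters only at $\gamma=\pm1$.

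Next I would convert ``$\exists\,\gamma$'' into a threshold on $a_{22}$. For $\gamma\in(-1,1)\setminus\{0\}$ the determinant condition reads $a_{22}\ge\bar a_{22}(\gamma):=\gamma^2(1-\gamma^2)/Q(\gamma)$, valid exactly where $Q(\gamma):=a_{11}(1-\gamma^2)-c_x^2-c_y^2-2\gamma c_x c_y>0$. The hypotheses $c_x^2<a_{11}<c_x^2+c_y^2$ make $Q$ a downward parabola with discriminant $(a_{11}-c_x^2)(a_{11}-c_y^2)>0$ whose two roots are both negative, the left one equalling $-1$ precisely when $c_x=c_y$; so $\{Q>0\}$ is an interval $(\gamma_-,\gamma_+)\subseteq[-1,0)$. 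Since the PSD condition is monotone in $a_{22}$, $f$ is PSD if and only if $a_{22}\ge a_{22}^\ast:=\inf_{\gamma\in(\gamma_-,\gamma_+)}\bar a_{22}(\gamma)$, with the caveat that when $c_x=c_y$ one must also admit $\gamma=-1$, where $M'(-1)\succeq 0$ reduces to $a_{22}\ge 1/(a_{11}-c_x^2)$; since this value equals $\lim_{\gamma\to-1^+}\bar a_{22}(\gamma)$, the threshold description remains uniform.

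Finally I would compute $a_{22}^\ast$. As $\gamma\to\gamma_+^-$ (and as $\gamma\to\gamma_-^+$ when $c_x>c_y$) one has $Q(\gamma)\to0^+$ while $\gamma^2(1-\gamma^2)$ stays bounded away from $0$, so $\bar a_{22}(\gamma)\to+\infty$ and the infimum is attained at an interior stationary point $\bar a_{22}'(\gamma^\ast)=0$. Performing the substitution $\gamma=\gamma_1-1$, under which $\gamma^2=(1-\gamma_1)^2$, $1-\gamma^2=\gamma_1(2-\gamma_1)$, and the feasible interval becomes a subinterval of $(0,1)$, turns this stationarity equation into exactly $\Omega(\gamma_1)=0$ from~\eqref{equ:Omega}. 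Evaluating $\Omega$ at the endpoints of $\bigl(\tfrac{c_x-c_y}{c_x},1\bigr)$ gives $\Omega(1)=a_{11}-c_x^2-c_y^2<0$ and $\Omega\bigl(\tfrac{c_x-c_y}{c_x}\bigr)=(c_x^2-c_y^2)^2(a_{11}-c_x^2)/c_x^4>0$, exhibiting a root $\gamma_1^\ast$ in that interval; substituting it back and using $\Omega(\gamma_1^\ast)=0$ to remove $a_{11}$ from $Q(\gamma_1^\ast-1)$ yields the stated value of $a_{22}^\ast$ when $c_x>c_y$. When $c_x=c_y$ the polynomial $\Omega$ factors as $\gamma_1^2\bigl[a_{11}(2-\gamma_1)^2-c_y^2(5-3\gamma_1)\bigr]$, so $\gamma_1^\ast$ becomes explicit through $3c_y-\sqrt{9c_y^2-4a_{11}}$; whether this root lies inside the feasible interval — which one checks is equivalent to comparing $a_{11}$ with $\tfrac54 c_y^2$ — selects which of the two sub-case formulas for $a_{22}^\ast$ is in force, the borderline regime being governed instead by the behaviour of $\bar a_{22}$ near the endpoint $\gamma=-1$.

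The main difficulty is this last step's case bookkeeping, not any single sharp estimate. Equivalently to the matrix picture, after completing the square in $(x_1,x_2)$ the polynomial $f$ is PSD iff the binary quartic $\psi(y_1,y_2):=(a_{11}y_1^2+2c_xy_1y_2+y_2^2)(y_1^2+a_{22}y_2^2)-c_y^2y_1^4$ is nonnegative, and $a_{22}^\ast$ is the value at which $\psi$ first acquires a real double zero. The delicate point is to decide, among the several stationary points of $\bar a_{22}$ — equivalently the several double‑root configurations of $\psi$ — which one genuinely represents the passage from non‑PSD to PSD, i.e.\ to keep track of the side constraints $\gamma^2\le1$ and $a_{11}-\gamma^2/a_{22}\ge c_x^2$ and of the sign of the residual quadratic factor of $\psi$, and then to simplify the resulting algebraic expression; the degeneracy $c_x=c_y$, where the feasible $\gamma$‑interval abuts $-1$, is exactly where the analysis bifurcates into the sub-cases of statement~$(c)$.
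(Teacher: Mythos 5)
Your plan is correct and takes a genuinely different route from the paper's proof. The paper posits the five-term SOS ansatz~\eqref{CaseII-1-sos}, matches coefficients to obtain the system~\eqref{equ:abc}, eliminates $\gamma_2,\gamma_3,\alpha,\beta$ via~\eqref{pa2}--\eqref{pa5} to reduce to $\Omega(\gamma_1)=0$, and proves necessity separately by evaluating $f$ at $x_2=-\alpha x_1=1$, $y_2=-\beta y_1=1$, which kills all but the last term of the ansatz. You instead combine Theorem~\ref{BQ-TRI} with Theorem~\ref{thm:sos}, Schur-complement the $4\times4$ Gram matrix $M(\gamma)$ down to the $3\times3$ matrix $M'(\gamma)$, and solve the resulting one-parameter feasibility problem exactly as $a_{22}\ge\min_\gamma\bar a_{22}(\gamma)$. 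The two routes meet because, after the shift $\gamma=\gamma_1-1$, the stationarity equation $N'D-ND'=0$ for $\bar a_{22}=N/D$ is precisely $-2(1-\gamma_1)\,\Omega(\gamma_1)=0$, and because $\Omega(\gamma_1^*)=0$ forces the identity $Q(\gamma_1^*-1)\,\gamma_1^*(2-\gamma_1^*)=(1-\gamma_1^*)\bigl((\gamma_1^*-1)c_x+c_y\bigr)\bigl(c_x+(\gamma_1^*-1)c_y\bigr)$, under which $\bar a_{22}(\gamma_1^*-1)$ collapses to the paper's $\gamma_3^*$; you should write out these two short computations explicitly rather than asserting them. Your approach is conceptually tighter --- sufficiency and necessity fall out of a single extremal computation rather than two separate arguments, and no ansatz need be guessed --- at the cost of the endpoint bookkeeping you already flag when $c_x=c_y$, where $\{Q>0\}$ abuts $\gamma=-1$ and the minimizer may migrate from the interior stationary point to that endpoint with $\bar a_{22}\to 1/(a_{11}-c_x^2)$. (Incidentally, that last value is exactly what the paper's proof produces in its Case~(i2); the extra $-4/(3c_y-\sqrt{9c_y^2-4a_{11}})^2$ in the second line of the displayed condition $(c)$ of the theorem appears to be a slip carried over from the first line, so the discrepancy you would find here is with the statement, not with the paper's proof.)
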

	\begin{proof}
		We assume that $f$ has the following {SOS} decomposition:
		\begin{align}
			\nonumber f(\x, \y) = & \gamma_1 (\alpha x_1 + x_2)^2y_1^2 + \gamma_1 x_1^2(\beta y_1+y_2)^2 + \gamma_2\left[(\alpha+\beta) x_1y_1 + x_1y_2+ x_2y_1\right]^2\\
			& + \gamma_3(\alpha\beta x_1y_1 - x_2y_2)^2 + \left(a_{22} -\gamma_3\right)x_2^2y_2^2, \label{CaseII-1-sos}
		\end{align}
		where parameters $\gamma_1, \gamma_2\ge 0$ and $0 \le \gamma_3 \le a_{22}$. By~comparing~(\ref{NewCase-1-1}) and~(\ref{CaseII-1-sos}),  we observe~that\vspace{-18pt}
		\begin{subequations} \label{equ:abc}
			\begin{align}
				\gamma_1(\alpha^2+\beta^2) + \gamma_2(\alpha+\beta)^2 +\gamma_3\alpha^2\beta^2 & = a_{11},\label{pa1}\\
				\gamma_1 + \gamma_2 & = 1,\label{pa2}\\
				\gamma_1\beta + \gamma_2(\alpha+\beta) & = c_x,\label{pa3}\\
				\gamma_1\alpha + \gamma_2(\alpha+\beta) & = c_y,\label{pa4}\\
				\gamma_2 - \gamma_3\alpha\beta & = 0.\label{pa5}
			\end{align}
		\end{subequations}
		{{The} polynomial $f$ admits an SOS decomposition if the system of Equation \eqref{equ:abc}  admits solutions $\alpha, \beta, \gamma_1, \gamma_2, \gamma_3$  satisfying $\gamma_1, \gamma_2\ge 0$ and $0 \le \gamma_3 \le a_{22}$.}
		Combining \mbox{Equations~(\ref{pa3}) and~(\ref{pa4})} yields
		$(\alpha - \beta)\gamma_1 = c_y - c_x$.
		In the   case where $c_y=c_x$,    either $\alpha = \beta$ or $\gamma_1 = 0$.  
		{We therefore proceed by analyzing two distinct cases separately:
			(i)~$c_y = c_x$ and    (ii) $c_y \neq c_x$.
			The former case is subsequently divided into subcases (i1) and (i2) for detailed examination.}

		Case (i1): $c_y = c_x$ and $\frac54 c_y^2\le a_{11}\le 2c_y^2$.   By~setting $\alpha = \beta{\neq 0}$ and {$\gamma_2 = 1-\gamma_1$, we can simplify Equations~(\ref{pa1})--(\ref{pa5}) to}
		\begin{subequations}
			\begin{align}
				{2\gamma_1\alpha^2+ 4(1-\gamma_1)\alpha^2} +\gamma_3\alpha^4 & = a_{11},\label{pa8}\\
				\gamma_1\alpha + 2(1-\gamma_1)\alpha & = c_y,\label{pa9}\\
				1-\gamma_1 - \gamma_3\alpha^2 & = 0.\label{pa10}
			\end{align}
		\end{subequations}
		{Combining}
		{equations}~(\ref{pa8}) and~(\ref{pa10}) yields
		\begin{align*}
			\alpha^2(5-3\gamma_1) = a_{11} \text{ and }
			(2-\gamma_1) \alpha =c_y.
		\end{align*}
		{Therefore}, we obtain that  $\gamma_1 = 2-\frac{c_y}{\alpha}$ and $\alpha^2-3c_y\alpha+a_{11}=0$. It follows from  $a_{11} \le 2c_y^2$  that the quadratic equation in $\alpha$ admits two distinct real roots  $\alpha=\frac{3c_y\pm\sqrt{9c_y^2-4a_{11}}}{2}$.  By~the fact that $0\le \gamma_1 = 2-\frac{c_y}{\alpha}\le 1$, only the  root
		$\alpha^*=\frac{3c_y-\sqrt{9c_y^2-4a_{11}}}{2}$ is feasible.
		Consequently, in~Case (i1), if~		$$a_{22} \ge \gamma_3^* = {c_y\over (\alpha^*)^4}-{1\over (\alpha^*)^2},$$
		then $f$ is {SOS.}

		Case (i2): $c_y = c_x$ and $c_y^2<a_{11}\le\frac54 c_y^2$. By~setting $\gamma_1=0$ and $\gamma_2 = 1$,  we can simplify~(\ref{pa1})--(\ref{pa5}) to
		\begin{align}\label{equ:case(i2)}
			(\alpha+\beta)^2+\alpha\beta = a_{11}, \ \  \alpha+\beta=c_y, \  \text{ and } \gamma_3=\frac{1}{\alpha\beta}.
		\end{align}
		{By} direct computation, we   obtain $\beta = c_y-\alpha$, $c_y^2+\alpha\beta=a_{11}$, and~ $\gamma_3=\frac{1}{a_{11}-c_y^2}$. Consequently, we  derive the following quadratic equation with respective to $\alpha$:
		\begin{align}\label{equ:alpha}
			\alpha^2-c_y\alpha+a_{11}-c_y^2=0.
		\end{align}
		{It} follows from $c_y^2<a_{11} \le \frac54c_y^2$  that  \eqref{equ:alpha} admits two real solutions, which derives that \eqref{equ:case(i2)} is feasible.
		Hence, in~Case (i2), if~$$a_{22} \ge \gamma_3^* = \frac{1}{a_{11}-c_x^2},$$ then $f$ is {SOS.}

		
		Case (ii): $c_x> c_y$
		and $c_x^2<a_{11}<c_x^2+c_y^2$. Then we obtain that $\alpha\neq \beta$, and~$\gamma_1\neq 0$.
		It follows from~(\ref{pa2})--(\ref{pa5})   that
		\begin{align}\label{equ:gamma23albe}
			\gamma_2=1-\gamma_1, \ \gamma_3 = \frac{1-\gamma_1}{\alpha\beta}, \ \alpha=\frac{(\gamma_1-1)c_x+c_y}{\gamma_1(2-\gamma_1)},  \ \beta=\frac{c_x+(\gamma_1-1)c_y}{\gamma_1(2-\gamma_1)}.
		\end{align}
		{Consequently},   Equation~(\ref{equ:Omega}) is derived directly from \eqref{pa1}.
		By the inequalities\linebreak $\Omega(1) = a_{11}-c_x^2-c_y^2 <0$ and
		$\Omega\left(\frac{c_x-c_y}{c_x}\right)=\frac{(c_x^2-c_y^2)^2}{c_x^4}(a_{11}-c_x^2)>0$, we conclude that there exists a solution $\gamma_1^*\in \left(\frac{c_x-c_y}{c_x},1\right)$ such that $\Omega(\gamma_1^*)=0$.
		Subsequently, we obtain the corresponding values of $\gamma_2^*, \gamma_3^*, \alpha^*$ and $\beta^*$ by \eqref{equ:gamma23albe}.
		Furthermore, it follows directly    from $c_x> c_y$  and $\gamma_1^*\in (0,1)$ that $\beta^*>0$, and~from    $\gamma_1^*>\frac{c_x-c_y}{c_x}$ that
		$\alpha^*>0$.  Consequently,  $\gamma_3^*=\frac{1-\gamma_1^*}{\alpha^*\beta^*}> 0$. Thus, if~$$a_{22} \ge \gamma_3^*=	 \frac{(\gamma_1^*)^2(1-\gamma_1^*)(2-\gamma_1^*)^2}{((\gamma_1^*-1)c_x+c_y)(c_x+(\gamma_1^*-1)c_y)}>0,$$
		then $f$ is {SOS.}
		
		
		In both Cases (i) and   (ii), if~$a_{22} < \gamma_3^*$, then $f$ is not {PSD.}   This can be seen by letting $x_2 = - \alpha x_1 = 1$ and $y_2 = -\beta y_1 = 1$ in~(\ref{CaseII-1-sos}).
		This completes the proof.
	\end{proof}

	\begin{example}
			Let $a_{11}=1.2$, $a_{22}=6$, $c_x=c_y=-1$. The~corresponding biquadratic polynomial is given by:
			\begin{eqnarray*}
				f(\vx,\vy) = 1.2x_1^2y_1^2+x_1^2y_2^2+x_2^2y_1^2+6x_2^2y_2^2-2x_1^2y_1y_2-2x_1x_2y_1^2.
			\end{eqnarray*}
			{We} could verify that $c_y^2\le a_{11}\le \frac54c_y^2$  and $a_{22}=6>\frac{1}{a_{11}-c_x^2}-\frac{4}{\left(3c_y-\sqrt{9c_y^2-4a_{11}}\right)^2}=4.8431$. According to Theorem~\ref{Thm:case3}, these conditions guarantee that $f$ is an  SOS. By~  YALMIP   and SDPT4  within the MATLAB environment,  we obtain an explicit SOS decomposition as follows:\vspace{-3pt}
			\begin{eqnarray*}
				f(\vx,\vy)&=&(-0.4876x_1y_1+0.1140x_1y_2+0.1140x_2y_1+2.4326x_2y_2)^2\\
				&&+(0.9781x_1y_1-0.9685x_1y_2-0.9685x_2y_1+0.2869x_2y_2)^2\\
				&&+ (0.2181x_1y_2-0.2181x_2y_1)^2\\
				&&+(0.0739x_1y_1+0.0390x_1y_2+0.0390x_2y_1+0.0112x_2y_2)^2.
			\end{eqnarray*}
		\end{example}
		
		\section{Conclusions and Open~Questions}\label{sec5}
	In his seminal 1888 work~\cite{Hi88}, Hilbert demonstrated the existence of  {PSD}  homogeneous quartic polynomials in four variables that cannot be expressed as {SOS.} Nearly a century later, Choi and Lam (1977) \cite{CL77} constructed an explicit example of such a  {PNS} quartic polynomial. In~the present work, we establish that despite being four-variable quartic polynomials, all $2\times2$ {PSD} biquadratic polynomials admit an {SOS} representation.
	The key technique is that  an $m \times n$ biquadratic polynomial {can} be expressed as a tripartite homogeneous quartic polynomial of $m+n-1$ variables. {Furthermore, we established a necessary and sufficient condition for an $m\times n$ PSD biquadratic polynomial to admit an SOS representation, and~demonstrated that the SOS rank of such polynomials was bounded above by $mn$. Subsequently, we provided a constructive proof for obtaining the SOS decomposition of $2\times 2$ PSD biquadratic polynomials in three specific cases.}

	In general, an~$m \times n$ biquadratic polynomial can be written as
	$$f(\x, \y) = \sum_{i, j=1}^m \sum_{k, l = 1}^n a_{ijkl}x_ix_jy_ky_l.$$
	{In 1973, Calder\'{o}n \cite{Ca73} proved that an $m \times 2$ PSD biquadratic polynomial can be expressed as the sum of squares of ${3m(m+1) \over 2}$ quadratic polynomials.   In this paper, we
		proved that a   $2 \times 2$ PSD biquadratic polynomial can be expressed as the sum of squares of three quadratic polynomials.    An open question is what is the SOS rank for an $m \times 2$ PSD biquaratic polynomial for $m \ge 3$.     In 1975, Choi \cite{Ch75} gave a $3 \times 3$ PNS biquadratic polynomial.   Another
		open question is:  What is the maximum SOS rank of an $m \times n$ SOS biquadratic polynomial for $m, n \ge 3$.    An even more challenging question is to determine a given $m \times n$ PSD biquadratic polynomial is SOS or not numerically.}

	{{\bf Acknowledgment}}
	This work was partially supported by Research  Center for Intelligent Operations Research, The Hong Kong Polytechnic University (4-ZZT8),    the National Natural Science Foundation of China (Nos. 12471282 and 12131004), the R\&D project of Pazhou Lab (Huangpu) (Grant no. 2023K0603),  the Fundamental Research Funds for the Central Universities (Grant No. YWF-22-T-204), and Jiangsu Provincial Scientific Research Center of Applied Mathematics (Grant No. BK20233002).

	{{\bf Data availability}
		No datasets were generated or analysed during the current study.

		{\bf Conflict of interest} The authors declare no conflict of interest.}

	


\end{document}